\newcommand{\eps}{\varepsilon}
\numberwithin{equation}{section}
\newcommand{\bR}{\mathbb{R}}
\newcommand{\be}{\begin{equation}}
\newcommand{\ee}{\end{equation}}
\newcommand{\bea}{\begin{eqnarray}}
\newcommand{\eea}{\end{eqnarray}}
\newcommand{\Ric}{\operatorname{Ric}}
\newcommand{\p}{\partial}
\newcommand{\ra}{\rightarrow}
\newcommand{\px}[1]{\frac{\p}{\p x^{#1}}}
\newcommand{\nd}[1]{\frac{1}{#1}}
\newcommand{\Acirc}{\accentset{\circ}{A}}
\newcommand{\Scal}{\operatorname{Sc}}
\newcommand{\R}{\operatorname{R}}
\newcommand{\CW}{\mathcal{W}}
\newcommand{\CF}{\mathcal{F}}
\newcommand{\dmu}{\,\mathrm d \mu}
\newcommand{\dr}{\left.\tfrac{\partial}{\partial r}\right|_{r=0}}
\newcommand{\drr}{\left.\tfrac{\partial^2}{\partial r^2}\right|_{r=0}}
\newcommand{\dd}[1]{\tfrac{\partial}{\partial #1}}
\newtheorem{definition}{Definition}[section]
\newtheorem{theorem}[definition]{Theorem}
\newtheorem{proposition}[definition]{Proposition}
\newtheorem{lemma}[definition]{Lemma}
\newtheorem{corollary}[definition]{Corollary}
\theoremstyle{remark}
\begin{document}
\title{Local foliation of manifolds by surfaces of Willmore type}
\author{Tobias Lamm}
\address{Tobias Lamm: Karlsruhe Institute of Technology, Englerstrasse 2,
76131 Karlsruhe, Germany.}
\email{tobias.lamm@kit.edu}
\author{Jan Metzger}
\address{Jan Metzger: University of Potsdam, Institute for
  Mathematics, Karl-Liebknecht-Stra\ss{}e 24/25, 14476 Potsdam, Germany}
\email{jan.metzger@uni-potsdam.de}
\author{Felix Schulze}
\address{Felix Schulze: 
  Department of Mathematics, University College London, 25 Gordon St,
  London WC1E 6BT, UK}
\email{f.schulze@ucl.ac.uk}
\date{\today}
\begin{abstract}
  We show the existence of a local foliation of a three dimensional
  Riemannian manifold by critical points of the Willmore functional
  subject to a small area constraint around non-degenerate critical
  points of the scalar curvature. This adapts a method developed by
  Rugang Ye to construct foliations by surfaces of constant mean
  curvature.
\end{abstract}
\maketitle


\section{Introduction}
\label{sec:introduction}
In this paper we consider the Willmore functional 
\begin{equation*}
  \CF(\Sigma) = \frac{1}{4} \int_\Sigma H^2 \dmu
\end{equation*}
for surfaces $\Sigma$ immersed in a three dimensional Riemannian
manifold $(M,g)$. Here $H=\lambda_1+\lambda_2$ denotes the sum of the
principal curvatures of $\Sigma$.

More precisely, we consider the variational problem
\begin{equation}
  \label{eq:minprob}
  \inf \{ \CF(\Sigma) \mid \Sigma \hookrightarrow M \text{ with }
  |\Sigma| = a \}
\end{equation}
where $a\in (0,\infty)$ is a (small) prescribed constant and
$|\Sigma|$ denotes the area of $\Sigma$ with respect to the induced
metric.

The Euler-Lagrange equation for this variational problem is
\begin{equation}
  \label{eq:maineq}
  \Delta H + H|\Acirc|^2 + H \Ric(\nu,\nu) = \lambda H.
\end{equation}
Here $\Delta$ denotes the Laplace-Beltrami operator on $\Sigma$,
$\Acirc$ is the trace free part of the second fundamental form $A$ of
$\Sigma$ and $\Ric(\nu,\nu)$ is the Ricci-curvature of $(M,g)$ in
direction of the normal $\nu$ to $\Sigma$. Note that the left hand
side is two times the first variation of $\CF$ and the right hand side of this
expression is a Lagrange-parameter $\lambda\in\R$ multiplied with the first
variation of the area functional.

In previous papers the first two authors have shown that if $(M,g)$ is
compact then there exists a small $a_0\in(0,\infty)$ depending only on
$(M,g)$ such that the infimum in \eqref{eq:minprob} is attained for
all $a\in(0,a_0)$ on smooth surfaces $\Sigma_a$ \cite{LM:2013}. See
\cite{Chen-Li:2014} and \cite{Mondino-Riviere:2013} for alternative
proofs and \cite{Mattuschka} for a recent parabolic approach.
Existence and multiplicity results of Willmore surfaces in Riemannian
manifolds have been studied previously in a perturbative setting in
\cite{Mondino2010}, \cite{Mondino2013} where the functionals $\CF$ and
the $L^2$-norm of $\Acirc$ are considered without a constraint.

For $a\to 0$ the surfaces $\Sigma_a$ converge to critical points of
the scalar curvature~\cite{LM:2010,LM:2013,Laurin-Mondino:2014}. A
similar result has been obtained previously for small isoperimetric
surfaces by Druet~\cite{Druet2002}. This  was later generalized
by Laurain~\cite{Laurain2012} to surfaces with constant mean
curvature.

It is natural to ask about the precise structure of this family
when $a$ tends to zero.  A similar situation was considered by
Ye~\cite{Ye:1991} for surfaces of constant mean curvature, which are
critical for the isoperimetric problem, that is to minimize area
subject to prescribed enclosed volume. He proves that given a
non-degenerate critical point $p$ of the scalar curvature one can find
a pointed neighborhood $\dot U = U\setminus\{p\}$ which is foliated by
hyper-surfaces of constant mean curvature. That is
$\dot U = \bigcup_{H\in(H_0,\infty)} \Sigma_H$ where $\Sigma_H$ has
constant mean curvature $H$. For $H\to \infty$ these surfaces become
spherical and approach geodesic spheres $S_r(p)$ with radius
$r\approx \frac{2}{H}$. Ye uses an implicit function argument to show
that the $\Sigma_H$ can be constructed as graphs over $S_r(0)$. The
main difficulty is that the operator linearizing the mean curvature
has an approximate kernel corresponding to translations. This
approximate kernel can be dealt with by allowing a translation of the
$S_r(0)$ and using the non-degeneracy of the second derivative of the
scalar curvature. Our result in this paper is to adapt the method of
Ye to the case of the Willmore functional. More precisely, we get the
following result:
\begin{theorem}
  \label{thm:main}
  Let $(M,g)$ be a smooth Riemannian manifold and let $p\in M$ be such
  that $\nabla \Scal (p) = 0$ and such that $\nabla^2 \Scal (p)$ is
  non-degenerate. Then there exists $a_0\in(0,\infty)$, a neighborhood
  $U$ of $p$ and for each $a\in(0,a_0)$ a spherical surface $\Sigma_a$
  which satisfies \eqref{eq:maineq} for some $\lambda\in \R$ and 
  $|\Sigma_a|=a$. The $\Sigma_a$ are mutually disjoint and
  $\bigcup_{(0,a_0)} \Sigma_a = U\setminus\{p\}$.
\end{theorem}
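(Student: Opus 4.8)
The plan is a Lyapunov--Schmidt reduction adapted to the fourth order equation~\eqref{eq:maineq}, in the spirit of Ye. Work near $p$ in $g$-normal coordinates centred at a nearby point $q=\exp_p(\xi)$, and for small $r>0$ dilate by the factor $r$: the rescaled metric $g_{r,\xi}$ (namely $g$ written in these normal coordinates and dilated) satisfies, on $B_2(0)\subset\mathbb{R}^3$,
\begin{equation*}
  g_{r,\xi} = \delta + r^2 A_\xi + r^3 B_\xi + O(r^4),
\end{equation*}
where $A_\xi$ is a quadratic polynomial in $y$ built from $\Riem(q)$ and $B_\xi$ a cubic polynomial built from $\nabla\Riem(q)$; in particular $A_\xi$ is even and $B_\xi$ is odd under $y\mapsto -y$. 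Parametrize candidate surfaces as normal graphs $\Sigma_{r,\xi,w}$ of a small function $w$ over the unit sphere $S^2=\{|y|=1\}$. Since the left-hand side of~\eqref{eq:maineq} scales by $\rho^{-3}$ and $H$ by $\rho^{-1}$ under $g\mapsto\rho^2 g$, solving~\eqref{eq:maineq} on a surface of area $a$ is equivalent, in the rescaled picture, to solving
\begin{equation*}
  \Phi(r,\xi,w,\mu) := \Delta H + H|\Acirc|^2 + H\Ric(\nu,\nu) - \mu H = 0,
\end{equation*}
with $\Delta,H,\Acirc,\Ric,\nu$ referring to $\Sigma_{r,\xi,w}$ in the metric $g_{r,\xi}$, $\mu = r^2\lambda$, subject to the separate constraint $|\Sigma| = r^2\bigl(4\pi + O(r^2) + O(\|w\|^2)\bigr) = a$, which is used at the end to trade $r$ for $a$. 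Note $\Phi(0,\xi,0,0)=0$: at $r=0$ the metric is Euclidean, $\Acirc$ and $\Ric$ vanish, $H\equiv 2$, and the unit sphere is a round (hence Willmore) sphere.

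\textbf{The linear theory and the auxiliary equation.}
The linearization of $\Phi$ in $(w,\mu)$ at $(r,w,\mu)=(0,0,0)$ is $(v,s)\mapsto Lv - 2s$, where $L$ is the (self-adjoint, fourth order, elliptic) Jacobi operator of the Willmore functional at the round sphere $S^2\subset\mathbb{R}^3$. By the conformal invariance of $\CF$ — dilations produce the $\ell=0$ mode, translations the $\ell=1$ modes — one has $\ker L = \mathrm{span}\{1, y^1, y^2, y^3\}$, and $L$ restricts to an isomorphism on the $L^2(S^2)$-orthogonal complement of this span. Restricting $w$ to the complement $V$ of $\mathrm{span}\{1,y^1,y^2,y^3\}$ in $C^{4,\alpha}(S^2)$ and letting $s=\mu$ absorb the constant mode, the map $(v,s)\mapsto Lv - 2s$ becomes an isomorphism $V\times\mathbb{R}\to W$, where $W = \bigl(\mathrm{span}\{y^1,y^2,y^3\}\bigr)^{\perp}\subset C^{0,\alpha}(S^2)$. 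Since the coefficients of the $g_{r,\xi}$-operators converge in $C^k(S^2)$ to their Euclidean counterparts as $r\to0$, one gets Schauder estimates uniform in $(r,\xi)$, and the implicit function theorem produces, for all small $(r,\xi)$, a unique small solution $\bigl(w(r,\xi),\mu(r,\xi)\bigr)\in V\times\mathbb{R}$ of the \emph{auxiliary equation} $\Pi\,\Phi\bigl(r,\xi,w(r,\xi),\mu(r,\xi)\bigr)=0$, $\Pi$ being the $L^2$-projection onto $W$. Examining the $r$-expansion one finds $w(r,\xi) = O(r^2)$ — and even to leading order, since it is driven by the even tensor $A_\xi$ and by $\Ric_{g_{r,\xi}}(\nu,\nu)$, which is an $r^2$-multiple of a quadratic form in $\nu$ — while $\mu(r,\xi) = \tfrac{1}{3}\Scal(q)\,r^2 + O(r^3)$, so that $\lambda = r^{-2}\mu \to \tfrac{1}{3}\Scal(p)$ as $r\to0$.

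\textbf{The reduced equation and the non-degeneracy hypothesis.}
There remain the three scalar \emph{reduced equations}, encoding the translational directions,
\begin{equation*}
  \phi_i(r,\xi) := \bigl\langle \Phi\bigl(r,\xi,w(r,\xi),\mu(r,\xi)\bigr),\, y^i\bigr\rangle_{L^2(S^2)} = 0, \qquad i=1,2,3.
\end{equation*}
Because $\Phi$ vanishes at $r=0$, the metric expansion has no $r^1$ term, and the $r^2$ term of $\Phi$ — assembled from $A_\xi$, from $\Ric_{g_{r,\xi}}(\nu,\nu)$, from the even $O(r^2)$ part of $w(r,\xi)$, and from $\mu(r,\xi)$ — is even in $y$, hence orthogonal to the $y^i$, one obtains $\phi_i(r,\xi) = r^3\psi_i(\xi) + O(r^4)$, where $r^3\psi_i$ is the $\ell=1$ component of the $r^3$ term of $\Phi$ at the solved $(w,\mu)$. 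This component is controlled by the odd cubic $B_\xi$ (the $O(r^3)$ part of $w$, being odd and orthogonal to the $y^i$, contributes no $\ell=1$ mode), hence depends linearly on $\nabla\Riem(q)$; as the only natural $1$-form linear in $\nabla\Riem$ is, by the contracted Bianchi identity, a fixed multiple of $\nabla\Scal$, this yields $r^{-3}\phi_i(r,\xi) = c\,\partial_i\Scal(\exp_p(\xi)) + O(r)$ for a universal constant $c$ which is nonzero — computable from the expansion of the Willmore operator of small geodesic spheres, and consistent with the fact~\cite{LM:2013} that small Willmore surfaces concentrate at critical points of $\Scal$. Now $\partial_i\Scal(\exp_p(\xi))=0$ is solved by $\xi=0$, and the differential of $\xi\mapsto\bigl(\partial_i\Scal(\exp_p(\xi))\bigr)_i$ at $\xi=0$ is $\nabla^2\Scal(p)$, invertible by hypothesis; hence the implicit function theorem applied to $r^{-3}\phi_i$ gives, for each small $r\ge 0$, a unique small $\xi(r)$, smooth in $r$, with $\phi_i\bigl(r,\xi(r)\bigr)=0$ for $r>0$ and $\xi(r)\to 0$ as $r\to0$. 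The surface $\Sigma_r := \Sigma_{r,\xi(r),w(r,\xi(r))}$ (in the original metric) is then an embedded topological sphere satisfying~\eqref{eq:maineq} with $\lambda = r^{-2}\mu\bigl(r,\xi(r)\bigr)$.

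\textbf{Area normalization, the foliation, and the main obstacle.}
Finally, $a(r):=|\Sigma_r| = 4\pi r^2\bigl(1+O(r^2)\bigr)$ is smooth and strictly increasing for $r$ small, hence has a smooth inverse $r=r(a)$ on some $(0,a_0)$; the family $\Sigma_a:=\Sigma_{r(a)}$ then satisfies $|\Sigma_a|=a$, solves~\eqref{eq:maineq} for the corresponding $\lambda$, and consists of embedded spheres. To show the $\Sigma_a$ are mutually disjoint and exhaust $U\setminus\{p\}$ for a suitable neighborhood $U$, one argues as in Ye: the $\Sigma_r$ are, uniformly, $C^k$-close to the geodesic spheres $S_r(\exp_p(\xi(r)))$ and depend smoothly on $r$, so the family of maps $r\mapsto\Sigma_r$ has positive normal velocity near $p$ and therefore foliates a punctured neighborhood of $p$, onto which it closes down since $\xi(r)\to0$. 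The main obstacle is the fourth order nature of~\eqref{eq:maineq}, which makes two steps substantially more involved than in Ye's second order setting: first, the linear theory — in particular pinning down $\ker L = \mathrm{span}\{1,y^1,y^2,y^3\}$ and obtaining the rescaled Schauder estimates uniformly as $r\to0$ — and, above all, the expansion of the reduced map, where one must carefully identify which of the many curvature terms produced by $A_\xi$, $B_\xi$ and $w(r,\xi)$ survive the $\ell=1$ projection, so as to isolate the leading term $c\,\partial_i\Scal$ with $c\neq 0$.
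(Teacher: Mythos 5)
Your overall strategy coincides with the paper's: rescale geodesic normal coordinates around a moving center, write candidate surfaces as small graphs over $S^2$, use the Lagrange multiplier to absorb the $\ell=0$ part of the kernel of $(-\Delta)(-\Delta-2)$ and the center $\xi$ to kill the $\ell=1$ part, apply the implicit function theorem (the paper solves the finite-dimensional projections for $(\lambda,\tau)$ first and the complement afterwards, you do the standard Lyapunov--Schmidt order; this difference is immaterial), and finally reparameterize by area. However, there are two genuine gaps. The first is the non-vanishing of the constant $c$ in the reduced map. Your naturality/Bianchi argument only shows that the $\ell=1$ projection of the $r^3$-term is \emph{proportional} to $\nabla\Scal$; it cannot exclude $c=0$, and the appeal to the concentration results of \cite{LM:2013} is not a proof within your framework. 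The paper's entire Section~\ref{sec:expansion} exists to compute this expansion explicitly (Proposition~\ref{prop:expansion-Willmore}), yielding the coefficient $\tfrac{4\pi}{3}\nabla\Scal$ in Lemma~\ref{thm:projection-to-kernel}; without that computation (or a precise citation of it) the non-degeneracy hypothesis cannot be brought to bear. You flag this, but it remains the load-bearing computation. (A smaller imprecision nearby: the reason the graph corrections contribute no $\ell=1$ mode at order $r^3$ is self-adjointness of $L$, i.e.\ $\langle Lw^{(k)},y^i\rangle=\langle w^{(k)},Ly^i\rangle=0$, not the parity of $w^{(3)}$ — odd functions are not orthogonal to the odd functions $y^i$; and one also needs that the linearized operator has no $O(r)$ term, which is the content of Lemma~\ref{thm:derivativePhi1}.)

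The second and more serious gap is the foliation step. You assert that the family has ``positive normal velocity'' because the surfaces are close to geodesic spheres and $\xi(r)\to 0$, but this does not follow: the surfaces are approximately spheres of radius $r$ centered at $\exp_p(\xi(r))$, and nestedness/disjointness requires the center to move strictly slower than the radius, i.e.\ $|\xi'(r)|<1$, which the implicit function theorem alone does not give — a priori $\xi'(0)$ is of size $\|(\nabla^2\Scal(p))^{-1}\|$ times the next-order coefficient of the reduced map and could exceed $1$. This is exactly why the paper (following Ye) proves $\tau(r)=O(r^2)$ in Lemma~\ref{thm:tau-is-o-r2}, which in turn rests on the parity analysis of Lemma~\ref{thm:Phi-phi-r-r-is-even} (evenness of $\varphi_0$, of $\Phi_{\varphi\varphi}(0,\tau,0,\lambda)\varphi_0\varphi_0$ and of $\Phi_{\varphi rr}(0,\tau,0,\lambda)\varphi_0$, whose $\tilde P_1$-projections therefore vanish); only then does the computation in Proposition~\ref{thm:is-foliation} give $\partial_r\Psi(0,x)=x$ and $\partial_r\eta|_{r=0}=1$, hence disjointness and exhaustion of a punctured neighborhood. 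You have the parity ingredients ($A_\xi$ even, $B_\xi$ odd, $w^{(2)}$ even) on the table but never use them to bound $\xi'(r)$, and you do not flag the issue; as written, the claim that the $\Sigma_a$ are mutually disjoint and fill $U\setminus\{p\}$ is unproved.
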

More detailed information on the structure of this foliation can be
found in section~\ref{sec:equation} where the implicit function
argument is carried out. In particular, we refer to
Corollary~\ref{thm:uniqueness2} for some comments about the local
uniqueness of the $\Sigma_a$.

The paper is organized as follows: In section~\ref{sec:expansion} we
calculate the expansion of the Willmore functional on small geodesic
spheres to set up the argument. In section~\ref{sec:equation} we use
the implicit function theorem to solve the equation in a very similar
manner to Ye. First we solve the equation in the kernel of the
linearized operator using the non-degeneracy condition on the scalar
curvature and by a generic implicit function argument we solve
perpendicular to the kernel. Proposition~\ref{thm:is-foliation} in
section~\ref{sec:foliation} establishes that the $\Sigma_a$ indeed
form a foliation as claimed. Finally in section~\ref{sec:uniqueness}
we prove a local uniqueness reslut for the $\Sigma_a$ as solutions
to~\eqref{eq:maineq}.

\subsection*{Acknowlegements}
The first and second author were supported by the DFG with grants LA
3444/1-1 resp. ME 3816/1-2. 
\\
The authors would like to thank the referee for the careful reading of the paper.

\subsection*{Remarks}
During the preparation of this manuscript, the authors learned that
Norihisa Ikoma, Andrea Malchiodi and Andrea Mondino
\cite{Ikoma-Malchiodi-Mondino} have an independent proof of
Theorem~\ref{thm:main}.


\section{The Willmore operator on geodesic spheres}
\label{sec:expansion}
In this section we compute the basic geometric quantities and the Willmore operator of small geodesic spheres. We consider a setup similar as in \cite{Ye:1991}, i.e. we  consider a point $p\in M^3$ and an orthonormal basis $\{e_j\}_{j=1}^3$ of  $T_pM$ which we use to identify $T_pM$ with $\bR^3$. Furthermore, we consider the map
$$ \phi:\mathbb{R}^{3}\supset B_{\rho_p}(0) \ra M: x \mapsto \exp_{p}(x^ie_i)\ ,$$
where $\rho_{p}>0$ is the injectivity radius of $p$.
Let $\tilde{g}$ be the pulled back metric of $M$ via $\phi$, with
$\langle \cdot,\cdot\rangle$ denoting the euclidean metric on
$\mathbb{R}^3$. We consider the map $\Psi_\sigma: \bR^3\ra \bR^3: x
\mapsto \sigma x$ and
denote $g := \sigma^{-2}\Psi_\sigma^*\tilde{g}$. 

We now compute the
second fundamental form of $S_\rho(0) \subset T_pM$ for $0<\rho<\sigma^{-1} \rho_{p}$. The normal to $S_\rho$ w.r.t. $g$ is given by $x/|x|$, and working w.l.o.g.~at the north pole, i.e. $e_i$ for $i=1,2$ are tangent vectors and $e_{3}$ is parallel to the normal, we obtain
\begin{align*}
 h_{ij} &= g\left(e_i, \nabla_{e_j}\nu\right) = g\left(e_i, \nabla_{e_j}\frac{x^l}{|x|}e_l\right)
	= g\left(e_i, \left(\frac{\delta^{jl}}{\rho}-\frac{x^lx^j}{\rho^3}\right)e_l\right)+\frac{x^l}{\rho}g\left(e_i,\Gamma_{jl}^k e_k\right)\\
&= \frac{1}{\rho}\left(g_{ij}-\frac{x^lx^j}{\rho^2}g_{il}+x^l\Gamma_{jl}^k g_{ik}\right)\ .
\end{align*} 
This yields since $x^l=0$ for $ l=1,2$
\begin{equation}\label{eq:2.1}
 h^i_{\,j} = \frac{1}{\rho}\Big(\delta^i_{\,j}+x^l\Gamma_{lj}^i\Big)\ .
\end{equation}
Furthermore, we have
\begin{align*}
0 = g\big(\nu, p^\perp_{\nu}(e_i)\big)= \frac{x^k}{\rho}g_{ki}-\frac{x^i}{\rho} \ ,
\end{align*} 
where $p^\perp_{\nu}(\cdot)=e-\langle e_i,\nu\rangle \nu$ is the orthogonal projection onto the subspace perpendicular to $\nu$. This gives
\begin{equation}\label{eq:2.2}
\delta_{im}=\frac{\p}{\p x^m}\left(x^kg_{ki}\right)=g_{mi}+x^k\frac{\p}{\p x^m}g_{ki} ,
\end{equation}
which we can use to compute
\begin{equation}\label{eq:2.3}
\begin{split}
 x^l\Gamma_{lj}^i &= \nd{2}g^{ik}\Big(x^l\px{l}g_{jk}+x^l\px{j}g_{lk}-x^l\px{k}g_{lj}\Big)\\
		  &= \nd{2}g^{ik}\Big(x^l\px{l}g_{jk}+\delta_{jk}-g_{jk}-\delta_{kj}+g_{kj}\Big)\\
		  &= \nd{2}g^{ik}x^l\px{l}g_{jk}\ .
\end{split}
\end{equation}
We denote partial
derivatives with a semicolon, instead of a comma for covariant
derivatives. From \cite{Lee-Parker:1987} and the definition of $g$ we have the formula
\begin{equation}\label{eq:2.4}
\begin{split}
g_{ij}(x)= &\ g_{ij}(0)+\frac{\sigma^2}{3}\R_{ipqj} x^px^q+\frac{\sigma^3}{6}\R_{ipqj,r} x^px^qx^r\\
		&+\sigma^4\Big(\nd{20}\R_{ipqj,rs} +\frac{2}{45}\R_{ipqt} \R_{jrst} \Big)x^px^qx^rx^s + O(\sigma^5|x|^5)\, ,
\end{split}
\end{equation}
where the curvature terms are all corresponding to $g$ and are evaluated at $0$. Since 
$$\px{m}g^{ij}= -g^{iv}g^{jw}\px{m}g_{vw}$$
and differentiating further (using that the first derivatives of $g_{ij}$ vanish at $0$) we obtain:
\begin{equation}\label{eq:2.5}
g^{ij}(x)= g^{ij}(0)- \frac{\sigma^2}{3}\R^{i\ \ j}_{\ pq
  \ } x^px^q - \frac{\sigma^3}{6}\R_{\ pq\, ,r}^{i\ \ j} x^px^qx^r + O(\sigma^4|x|^4) \ .
\end{equation}
Combining \eqref{eq:2.4} and \eqref{eq:2.5}, we see 
\begin{equation}\label{eq:2.6}
\begin{split}
\nd{2}g^{ik}x^l\px{l}g_{jk} = &\ \frac{1}{2} g^{ik} \frac{\p}{\p s}\left(g_{kj}(sx)\right)\Big|_{s=1}\\
 = &\ g^{ik}\Big(\frac{\sigma^2}{3}
\R_{kpqj} x^px^q+\frac{\sigma^3}{4} \R_{kpqj,r} x^px^qx^r+\sigma^4\Big(\nd{10}\R_{kpqj,rs} \\ 
&+\frac{4}{45}\R_{kpqt} \R_{jrst} \Big)x^px^qx^rx^s + O(\sigma^5|x|^5)\Big)\\
= & \ \frac{1}{3}\sigma^2
\R_{\ pqj}^i x^px^q+\nd{4}\sigma^3 \R_{\ pqj,r}^i
x^px^qx^r+\sigma^4\Big(\nd{10}\R_{\ pqj,rs}^i \\ 
&-\frac{1}{45}\R_{\ pqt}^i \R_{jrst} \Big)x^px^qx^rx^s + O(\sigma^5|x|^5)\ .
\end{split}
\end{equation}
Combining this with \eqref{eq:2.1} and \eqref{eq:2.3}, we can thus write
$$\rho\cdot h^i_{\ j} = \delta^i_{\ j}+\nd{2}g^{ik}x^l\px{l}g_{ji}=
\exp(a^i_{\ j})$$
where $\exp$ here is the exponential map on matrices and 
\begin{align*}
a^i_{\ j}:=& \ \frac{1}{3}\sigma^2
\R_{\ pqj}^i x^px^q+\nd{4}\sigma^3 \R_{\ pqj,r}^i
x^px^qx^r+\sigma^4\Big(\nd{10}\R_{\ pqj,rs}^i \\ 
& \ -\frac{7}{90}\R_{\ pqt}^i \R_{jrst} \Big)x^px^qx^rx^s + O(\sigma^5|x|^5)\ .
\end{align*}
This yields 
\begin{equation*}
\begin{split}
\det\nolimits_{3}&\Big( \delta^i_{
  j}+\nd{2}g^{ik}x^l\px{l}g_{jk}\Big)= \ \exp(\text{tr}(a^i_{ j}))\\
&= \ 1- \frac{1}{3}\sigma^2
\R_{pq} x^px^q-\nd{4}\sigma^3 \R_{pq,r} x^px^qx^r+\sigma^4\Big(-\nd{10}\R_{pq,rs} \\ 
& \ \ \ \ -\frac{7}{90}\R_{\ pqt}^k \R_{krst}+\nd{18}\R_{pq}\R_{rs} \Big)x^px^qx^rx^s + O(\sigma^5|x|^5)\ .
\end{split}
\end{equation*}
Note that for the Gauss curvature we have from \eqref{eq:2.1}
\begin{equation*}
 K_{S_\rho}= \nd{\rho^{2}} \det\nolimits_{2}\Big( \delta^i_{ j}+\nd{2}g^{ik}x^l\px{l}g_{jk}\Big)
= \nd{\rho^{2}} \det\nolimits_{3}\Big( \delta^i_{ j}+\frac{r}{2}g^{ik}\frac{\p}{\p r}g_{kj}\Big)\ ,
\end{equation*}
since $\frac{\p}{\p r}g_{k3}= 0\ \ \forall\, k=1,2,3$. Combining this with the above computation, this yields
\begin{equation}
\begin{split} \label{eq:2.7}
K_{S_\rho}(x)=\ & \nd{\rho^{2}}\Big( 1- \frac{1}{3}\sigma^2
\Ric_{pq} x^px^q-\nd{4}\sigma^3 \Ric_{pq,r} x^px^qx^r+\sigma^4\Big(-\nd{10}\Ric_{pq,rs} \\ 
& \ -\frac{7}{90}\R_{\ pqt}^k \R_{krst}+\nd{18}\Ric_{pq}\Ric_{rs} \Big)x^px^qx^rx^s\Big) + \rho^{-2}O(\sigma^5|x|^5)\ .
\end{split}
\end{equation}
Combining \eqref{eq:2.1}, \eqref{eq:2.3} and \eqref{eq:2.6} we get for the mean curvature, using $\Ric_{pq} = - \R^i_{\ pqi}$,
\begin{equation}
\begin{split} \label{eq:2.8}
 H_{S_\rho} =\ & \frac{1}{\rho} \text{tr}_{2}\Big(
  \delta^i_{\ j}+\nd{2}g^{ik}x^l\px{l}g_{jk}\Big) =
  \frac{1}{\rho}\Big(2+\text{tr}_{3}\Big(\nd{2}g^{ik}x^l\px{l}g_{ji}\Big)\Big)\\
=\ & \frac{1}{\rho}\Big(2- \frac{1}{3}\sigma^2
\Ric_{pq} x^px^q-\nd{4}\sigma^3 \Ric_{pq,r} x^px^qx^r-\sigma^4\Big(\nd{10}\Ric_{pq,rs} \\ 
&+\frac{1}{45}\R_{\ pqt}^i \R_{irst} \Big)x^px^qx^rx^s\Big) + \rho^{-1}O(\sigma^5|x|^5)\ .
\end{split}
\end{equation}
For the norm squared of the traceless second fundamental form, we have $|\Acirc|^2=\nd{2}(H^2-4K)$, which yields
\begin{equation}\label{eq:2.9}
  |\Acirc|^2= \rho^{-2}\sigma^4\Big(\frac{1}{9}\R^{i\ \ t}_{\
    pq}\R_{irst}
  -\nd{18}\Ric_{pq}\Ric_{rs}\Big)x^px^qx^rx^s + \rho^{-2}O(\sigma^5|x|^5)
\end{equation}
We now aim to compute the laplacian of $H$ on $S_1$. Using formula $(3.2)$ in \cite{Ecker04}, we have
\begin{equation}
\begin{split} \label{eq:2.10}
\Delta^{S_1}H=\ &
\Delta^g\tilde{H}-\text{Hess}(\tilde{H})(\nu,\nu)+g(\nabla \tilde{H},
\vec{H})\\
=\ &  \Delta^g\tilde{H}-\text{Hess}(\tilde{H})(x,x) - H x^i\frac{\partial
  \tilde{H}}{\partial x^i},
\end{split}
\end{equation}
where $\vec{H} = - H \nu$ is the mean curvature vector of $S_1$, and $\tilde{H}$ is any extension of $H$. We have
\begin{equation} \label{eq:2.11}
\Delta^g\tilde{H} = g^{ij}\frac{\p^2\tilde{H}}{\p x^i\p x^j} -
g^{ij}\Gamma_{ij}^k\frac{\p \tilde H}{\p x^k}\ .
\end{equation}
From \eqref{eq:2.4} and \eqref{eq:2.5} we get
\begin{equation}
\begin{split} \label{eq:2.12}
g^{ij}\Gamma_{ij}^k &= \nd{2}g^{ij}g^{kl}\Big(\frac{\p}{\p
  x^i}g_{lj}+ \frac{\p}{\p x^j}g_{il}-\frac{\p}{\p x^l}g_{ij}\Big) \\
  &=
g^{kl}g^{ij}\frac{\p}{\p x^i}g_{lj} - \nd{2}g^{kl}g^{ij}\frac{\p}{\p
  x^l}g_{ij}\\
&= \Big(g_\tau^{kl} + O(\sigma^2)\Big)\Big(g_\tau^{ij} +
O(\sigma^2)\Big)
\Big(\frac{\sigma^2}{3}\Big(\R_{lipj}+\R_{lpij}\Big)x^p +
O(\sigma^3)\Big)\\
&\ \ \  - \nd{2}\Big(g_\tau^{kl} + O(\sigma^2)\Big)\Big(g_\tau^{ij} +
O(\sigma^2)\Big)\Big(\frac{\sigma^2}{3}\Big(\R_{ilpj}+\R_{iplj}\Big)x^p +
O(\sigma^3)\Big)\\
&= \frac{2}{3}\sigma^2 \Ric^k_{\ p}x^p + O(\sigma^3)\, .
\end{split}
\end{equation}
We choose an extension of the mean curvature $H$ on $S_1$ via
$$\tilde{H}:= \rho H_{S_\rho}\, .$$
Combining this with \eqref{eq:2.8} and \eqref{eq:2.12}, this yields
\begin{equation}\label{eq:2.13}
  - g^{ij}\Gamma_{ij}^k\frac{\p \tilde H}{\p x^k} =
  \frac{4}{9}\sigma^4 \Ric^{\ k}_{p}\Ric_{kq}x^px^q +
  O(\sigma^5)\ .
\end{equation}
Similarly, using \eqref{eq:2.5} and \eqref{eq:2.8} we obtain
\begin{equation}
\begin{split} \label{eq:2.14}
   g^{ij}\frac{\p^2\tilde{H}}{\p x^i\p x^j} =&\ \Big(g^{ij} -
   \frac{\sigma^2}{3}\R^{i\ \
     j}_{pq}x^px^q+O(\sigma^3)\Big)\\
&\ \cdot \bigg(-\frac{2}{3}\sigma^2\Ric_{ij} -
\frac{\sigma^3}{2}\big(\Ric_{ij,p}+2\Ric_{ip,j}\big)x^p\\
&\ -
\sigma^4
\bigg(\nd{5}\Ric_{ij,pq}+\frac{2}{5}\Ric_{ip,jq}+\frac{2}{5}\Ric_{ip,qj}+\nd{5}\Ric_{pq,ij}\\
&\ +\frac{4}{45}\R^{s\
  \ t}_{\ ij}\R_{spqt}+\frac{8}{45}\R^{s\
  \ t}_{\ ip}\R_{sjqt}\bigg)x^px^q+ O(\sigma^5)\bigg)\\
=&\ -\frac{2}{3}\sigma^2\Scal-\sigma^3\Scal_{,p}x^p\\
&\ -
\sigma^4\bigg(\frac{3}{5}\Scal_{,pq}+\nd{5}(\Delta\Ric)_{pq}+\frac{2}{5}\Ric_p^{\
  s}\Ric_{sq}\\
&\ +\frac{4}{45}\Ric^{st}\R_{spqt}+\frac{8}{45}\R^{si
  \ t}_{\ \ p}\R_{siqt}\bigg)x^px^q + O(\sigma^5)\, ,
\end{split}
\end{equation}
where we used that, due to the sign convention on the curvature tensor
$(\Ric_{ij}= -\R^{t}_{\ ijt})$ and the second contracted Bianchi identity $2\Ric^t_{\ i,t}=\Scal_{ ,i}$,
we have 
$$\Ric_{ip,qj}=\Ric_{ip,jq}+\R^{\ \ s}_{qj\ i}\Ric_{sp}+R^{\ \ s}_{qj\
  p}\Ric_{si}$$
and thus
$$g^{ij}\Ric_{ip,qj}=\nd{2}\Scal_{,pq}+\Ric^{\ s}_q\Ric_{sp}+\R^{\
  is}_{q\ \ p}\Ric_{si}\, .$$
 This yields, using \eqref{eq:2.11} with \eqref{eq:2.13} and \eqref{eq:2.14} that
 \begin{equation}
 \begin{split}
 \label{eq:2.14b}
 \Delta^g\tilde{H} =&\ -\frac{2}{3}\sigma^2\Scal-\sigma^3\Scal_{,p}x^p\\
&\ -
\sigma^4\bigg(\frac{3}{5}\Scal_{,pq}+\nd{5}(\Delta\Ric)_{pq}-\frac{2}{45}\Ric_p^{\
  s}\Ric_{sq}\\
&\ +\frac{4}{45}\Ric^{st}\R_{spqt}+\frac{8}{45}\R^{si
  \ t}_{\ \ p}\R_{siqt}\bigg)x^px^q + O(\sigma^5)\, .
 \end{split}
\end{equation}
Furthermore, using \eqref{eq:2.8} we see
\begin{equation}
\begin{split} \label{eq:2.15}
  -\frac{\partial\tilde{H}}{\partial x^i \partial x^j}x^ix^j =&\  \frac{2}{3}\sigma^2
\Ric_{pq} x^px^q+\frac{3}{2}\sigma^3 \Ric_{pq,r} x^px^qx^r\\
&+\sigma^4\bigg(\frac{6}{5}\Ric_{pq,rs} 
+\frac{12}{45}\R_{\ pqt}^i \R_{irst} \bigg)x^px^qx^rx^s + 
O(\sigma^5)
\end{split}
\end{equation}
and combining \eqref{eq:2.3} with \eqref{eq:2.6} and \eqref{eq:2.8}
\begin{equation}
\begin{split} \label{eq:2.16}
 x^ix^j\Gamma_{ij}^k\frac{\p \tilde{H}}{\p x^k} =&\
  \nd{2}g^{kl}x^sx^j\frac{\p}{\p x^s}g_{jl}\frac{\p \tilde{H}}{\p
    x^k} \\
=&\ - \frac{2}{9}\sigma^4\R^k_{\ pqr}\Ric_{ks}x^px^qx^sx^r+
O(\sigma^5) = O(\sigma^5)\, ,
\end{split}
\end{equation}
since $\R^k_{\ pqr}\Ric_{ks}x^px^qx^rx^s=0$ by symmetry considerations.
Combining \eqref{eq:2.15} and \eqref{eq:2.16}, this yields
\begin{equation}
\begin{split} \label{eq:2.17}
  -\text{Hess}(\tilde{H})(x,x)=&\ -\frac{\partial\tilde{H}}{\partial x^i \partial x^j}x^ix^j + x^ix^j\Gamma_{ij}^k\frac{\p \tilde{H}}{\p x^k}\\
  =&\ \frac{2}{3}\sigma^2
\Ric_{pq} x^px^q+\frac{3}{2}\sigma^3 \Ric_{pq,r} x^px^qx^r\\
&\ +\sigma^4\left(\frac{6}{5}\Ric_{pq,rs} 
+\frac{12}{45}\R_{\ pqt}^i \R_{irst} \right)x^px^qx^rx^s\\&\ + 
O(\sigma^5) \, .
\end{split}
\end{equation}
By \eqref{eq:2.8} we have
\begin{equation}
\begin{split} \label{eq:2.18}
  -Hx^i\frac{\p \tilde{H}}{\p x^i} =&\
  \frac{4}{3}\sigma^2\Ric_{pq}x^px^q+\frac{3}{2}\sigma^3\Ric_{pq,r}x^px^qx^r +\sigma^4\Big(\frac{4}{5}\Ric_{pq,rs}\\
  &\ +
\frac{8}{45}\R^{i\ \ t}_{\  pq}\R_{irst} - \frac{2}{9}\Ric_{pq}\Ric_{rs}\Big)x^px^qx^rx^s+O(\sigma^5)
\end{split}
\end{equation}
Combining \eqref{eq:2.10} with \eqref{eq:2.14b}, \eqref{eq:2.17} and \eqref{eq:2.18} we arrive at
\begin{equation}
\begin{split} \label{eq:2.19}
\Delta^{S_1}H=\ &  \Delta^g\tilde{H}-\text{Hess}(\tilde{H})(x,x) - H x^i\frac{\partial
  \tilde{H}}{\partial x^i} \\
=\ & -\frac{2}{3}\sigma^2\Scal + 2 \sigma^2 \Ric_{pq}x^px^q - \sigma^3
\Scal_{,p}x^p+3\sigma^3 \Ric_{pq,r}x^px^qx^r\\
& - \sigma^4
\Big(\frac{3}{5}\Scal_{,pq}+\frac{1}{5}\Delta\Ric_{pq}-\frac{2}{45}\Ric_p^{\
  k}\Ric_{kq}+\frac{4}{45}\Ric^{kl}\R_{kpql}\\
&\ +\frac{8}{45}\R^{kl\
  m}_{\ \ p}\R_{klqm}\Big)x^px^q + \sigma^4 \Big(2
\Ric_{pq,rs}+\frac{4}{9}\R^{k\ \ l}_{\ pq}\R_{krsl}\\
&\ -\frac{2}{9}\Ric_{pq}\Ric_{rs}\Big)x^px^qx^rx^s + O(\sigma^5)
\end{split}
\end{equation}
We now aim to compute the area constrained Willmore equation on $S_1$, that is for $\lambda \in \mathbb{R}$ the quantity
\begin{equation}
 \label{eq:2.20}
\mathcal{W}_{\sigma,\lambda}:=\Delta^{S_1}H+ H|\Acirc|^2 + H\Ric(\nu,\nu)+ \sigma^2\lambda H\, .
\end{equation}
To deal with the Ricci term we do a Taylor 
expansion in normal coordinates on the original manifold around $p$.  We get for the Ricci curvature of $\tilde{g}$ that
$$\Ric_{pq}(x) =
\Ric_{pq}(0)+\Ric_{pq;r}(0)x^r+\frac{1}{2}\Ric_{pq;rs}(0)x^rx^s+O(|x|^3)\
.$$
Rescaling as before via the map $\Psi_\sigma$, this implies for the Ricci curvature of $g$
\begin{equation}
  \label{eq:12}
  \Ric_{pq}(x) =
  \sigma^2\Ric_{pq}+\sigma^3
  \Ric_{pq;r}x^r+\frac{\sigma^4}{2}\Ric_{pq;rs}x^rx^s+O(\sigma^5|x|^3)\,
  .
\end{equation}
Recall that we denote partial
derivatives with a semicolon, instead of a comma for covariant
derivatives. Since the Christoffel symbols and derivatives thereof are of order at
least $\sigma^2$ we see that we have on $S^1$:
\begin{align*}
 \Ric(\nu,\nu)= \sigma^2\Ric_{pq}x^px^q+\sigma^3
\Ric_{pq,r}x^px^qx^r+\frac{\sigma^4}{2}\Ric_{pq,rs}x^px^qx^rx^s+O(\sigma^5)\
\end{align*}
 and thus, combining this with \eqref{eq:2.8}
\begin{equation}
\begin{split} \label{eq:2.21}
  H \Ric(\nu,\nu)=&\ 2\sigma^2\Ric_{pq}x^px^q+2\sigma^3
\Ric_{pq,r}x^px^qx^r\\
&\
+\sigma^4\Big(\Ric_{pq,rs}-\frac{1}{3}\Ric_{pq}\Ric_{rs}\Big) x^px^qx^rx^s+O(\sigma^5)\, .
\end{split}
\end{equation}
Combining this with \eqref{eq:2.19}, \eqref{eq:2.8} and \eqref{eq:2.9} we arrive at the following proposition, where we replace the radius $\rho$ by $r$.

\begin{proposition}\label{prop:expansion-Willmore}Let $p\in M^3$ and  $\{e_j\}_{j=1}^3$ be an orthonormal basis of $T_pM$, via which $T_pM$ can be identified with $\bR^3$. Furthermore, we consider the map
$$ \phi:\mathbb{R}^{3}\supset B_{\rho_p}(0) \ra M: x \mapsto \exp_{p}(x^ie_i)\ ,$$
where $\rho_{p}>0$ is the injectivity radius of $p$.
Let $\tilde{g}$ be the pulled back metric of M via $\phi$, and consider the map $\Psi_r: \bR^3\ra \bR^3: x
\mapsto r x$ and the rescaled metric $g := r^{-2}\Psi_r^*\tilde{g}$. Then for $0<r< \rho_{p}$ one has the following expansion of the area constrained Willmore equation \eqref{eq:2.20} on $S_1$: 
\begin{equation*}
\begin{split} 
  \mathcal{W}_{r,\lambda}(S_1)=&\ r^2\left(2\lambda -\frac{2}{3}\Scal\right) + 4 r^2 \Ric_{pq}x^px^q - r^3
\Scal_{,p}x^p+5r^3 \Ric_{pq,s}x^px^qx^s\\
& - r^4
\bigg(\frac{\lambda}{3}\Ric_{pq}+\frac{3}{5}\Scal_{,pq}+\frac{1}{5}\Delta\Ric_{pq}-\frac{2}{45}\Ric_p^{\
  k}\Ric_{kq}+\frac{4}{45}\Ric^{kl}\R_{kpql}\\
&\ +\frac{8}{45}\R^{kl\
  m}_{\ \ p}\R_{klqm}\bigg)x^px^q + r^4 \bigg(3
\Ric_{pq,st}+\frac{2}{3}\R^{k\ \ l}_{\ pq}\R_{kstl}\\
&\ -\frac{2}{3}\Ric_{pq}\Ric_{st}\bigg)x^px^qx^sx^t + O(r^5)\, .
\end{split}
\end{equation*}
\end{proposition}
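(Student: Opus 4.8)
The plan is to assemble $\mathcal{W}_{r,\lambda}(S_1)$ out of its four constituent pieces in the definition \eqref{eq:2.20} (with $\sigma=r$), namely $\Delta^{S_1}H$, $H|\Acirc|^2$, $H\Ric(\nu,\nu)$ and the linear term $r^2\lambda H$, each expanded to order $r^4$ with a uniform $O(r^5)$ error. All the work happens on the fixed unit sphere $S_1\subset\bR^3$ equipped with the rescaled metric $g=r^{-2}\Psi_r^*\tilde g$, so the entire $r$-dependence sits in the coefficients of $g$. The two inputs for everything that follows are the Lee--Parker expansion \eqref{eq:2.4} of $g_{ij}$ and the induced expansion \eqref{eq:2.5} of $g^{ij}$; since the first derivatives of $g$ vanish at the origin, the Christoffel symbols and their derivatives are $O(r^2)$, which is what keeps the bookkeeping finite at each order.

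First I would record the second fundamental form of $S_\rho(0)$. Taking the normal to be $x/|x|$ and working at the north pole gives \eqref{eq:2.1}, together with the reduction \eqref{eq:2.3}, $x^l\Gamma^i_{lj}=\tfrac12 g^{ik}x^l\partial_l g_{jk}$, which follows from \eqref{eq:2.2}; feeding in the expansion \eqref{eq:2.6} of the right-hand side realizes $\rho\,h^i_{\ j}$ as $\exp(a^i_{\ j})$ for an explicit matrix $a^i_{\ j}=O(r^2)$. Using the matrix-exponential identity $\det\exp(a)=\exp(\mathrm{tr}\,a)$ (and its two-dimensional analogue for $K$) and the algebraic identity $|\Acirc|^2=\tfrac12(H^2-4K)$ then produces \eqref{eq:2.8}, \eqref{eq:2.7} and \eqref{eq:2.9} for $H$, $K_{S_\rho}$ and $|\Acirc|^2$; note that $|\Acirc|^2$, hence $H|\Acirc|^2$, is $O(r^4)$, so only its leading term enters the final formula, and likewise only the expansion \eqref{eq:2.8} of $H$ is needed for the terms $r^2\lambda H$ and $H\Ric(\nu,\nu)$.

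The core of the proof is the expansion \eqref{eq:2.19} of $\Delta^{S_1}H$. Here I would use Ecker's formula \eqref{eq:2.10} with the concrete extension $\tilde H:=\rho H_{S_\rho}$, and treat the three resulting terms separately: $\Delta^g\tilde H=g^{ij}\partial_i\partial_j\tilde H-g^{ij}\Gamma^k_{ij}\partial_k\tilde H$ needs the expansion \eqref{eq:2.12} of $g^{ij}\Gamma^k_{ij}$ to order $r^3$ and \eqref{eq:2.14} for the second-derivative part, after which the contracted Bianchi identity $2\Ric^t_{\ i,t}=\Scal_{,i}$ collapses $g^{ij}\Ric_{ip,qj}$ into $\tfrac12\Scal_{,pq}$ plus curvature corrections, giving \eqref{eq:2.14b}; the Hessian term $\mathrm{Hess}(\tilde H)(x,x)$ is \eqref{eq:2.15} corrected by the Christoffel contraction \eqref{eq:2.16}, which is $O(r^5)$ because $\R^k_{\ pqr}\Ric_{ks}x^px^qx^rx^s$ vanishes by symmetry; and the transport term $Hx^i\partial_i\tilde H$ is immediate from \eqref{eq:2.8}. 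In parallel, the Ricci term $H\Ric(\nu,\nu)$ is obtained by Taylor expanding $\Ric_{pq}$ in the \emph{unrescaled} normal coordinates, rescaling to get \eqref{eq:12}, and multiplying by the expansion of $H$, which yields \eqref{eq:2.21}. Adding \eqref{eq:2.19}, \eqref{eq:2.21}, $r^2\lambda H$ and $H|\Acirc|^2$ and collecting by powers of $r$ gives the stated identity.

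I expect the main obstacle to be purely organizational: tracking the dozen-odd quartic curvature contractions in $x^px^qx^sx^t$ and the quadratic ones in $x^px^q$, and applying the curvature symmetries, the sign conventions $\Ric_{ij}=-\R^t_{\ ijt}$ and $\Ric_{pq}=-\R^i_{\ pqi}$, and the contracted Bianchi identity in exactly the right places so that every coefficient matches. One also has to remember that covariant (semicolon) and partial (comma) derivatives of the curvature agree only to leading order; this is harmless here because each curvature term already carries a factor $\sigma^2$, but it must be invoked explicitly when passing from \eqref{eq:12} to the expansion of $\Ric(\nu,\nu)$ on $S_1$. There is no conceptual difficulty beyond this careful bookkeeping.
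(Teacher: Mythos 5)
Your proposal is correct and follows essentially the same route as the paper: the paper's ``proof'' of Proposition~\ref{prop:expansion-Willmore} is exactly the computation of Section~\ref{sec:expansion}, namely the Lee--Parker expansion of $g$, the matrix-exponential form of $\rho\,h^i_{\ j}$ yielding \eqref{eq:2.7}--\eqref{eq:2.9}, Ecker's formula \eqref{eq:2.10} with the extension $\tilde H=\rho H_{S_\rho}$ and the Bianchi-identity reduction leading to \eqref{eq:2.19}, the Taylor expansion \eqref{eq:12} giving \eqref{eq:2.21}, and the final summation of \eqref{eq:2.19}, \eqref{eq:2.21}, $r^2\lambda H$ and $H|\Acirc|^2$. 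Your remarks on which terms only contribute at leading order and on the comma/semicolon distinction match the paper's bookkeeping, so no gap remains beyond carrying out the coefficient collection you describe.
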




\section{The equation}
\label{sec:equation}
In this section we prove theorem~\ref{thm:main} via the implicit
function theorem.  We consider a setup similar to Ye \cite{Ye:1991}. Let $(M,g)$ be given with injectivity radius
$\rho>0$. Fix a base point $p\in M$ and an orthonormal frame
$\{e_j\}_{j=1}^3$ for $T_p(M)$. Consider the map:
\begin{equation*}
  c : \bR^3 \supset B_\rho(0) \to M  : \tau \mapsto \exp_p(\tau),
\end{equation*}
where $\exp_p:T_p M \to M$ denotes the exponential map of $M$ at $p$.
Let $e_j^\tau$ be the parallel transports of the $e_j$ to $c(\tau)$
along the geodesic $t\mapsto c(t\tau)|_{t\in[0,1]}$. Define the map
\begin{equation*}
  F_\tau : \bR^3 \supset B_{\rho}(0) \to M : x \mapsto
  \exp_{c(\tau)} (x^i e_i^\tau).
\end{equation*}
Let
$\Omega_1 := \{ \varphi \in C^{4,\frac12}(S_1) \mid
\| \varphi \|_{C^{4,\frac12}(S_1)}< 1\}$
and for $\varphi\in \Omega_1$ let
$S_\varphi:=\{ (1+\varphi(x))x \mid x\in S_1\}$. For
$\tau\in B_\rho\subset\bR^3$ and $r\in(0,\rho/2)$ let
$S(r,\tau,\varphi)=F_\tau(\Psi_r (S_\varphi))$ where $\Psi_r$ denotes
scaling by $r$ as in section~\ref{sec:expansion}.  Define
\begin{equation*}
  \tilde \Phi : (0,\rho/2)\times B_\rho(0) \times \Omega_1 \times \bR \to
  C^{\frac12}(S_1) : (r,\tau,\varphi,\lambda) \mapsto 
  \tilde\Phi(r,\tau,\varphi,\lambda)
\end{equation*}
where $\tilde\Phi(r,\tau,\varphi,\lambda)$ is the function
\begin{equation}
  \label{eq:3}
  \Delta H + H|\Acirc|^2+H \Ric(\nu,\nu)+\lambda H
\end{equation}
evaluated on $S(r,\tau,\varphi)$ with respect to the metric $g$ and
pulled back to $S_1$ via the parameterization
$x \mapsto F_\tau(\Psi_r((1+\varphi(x))x))$.

Our goal is to find $r_0\in(0,\rho/2)$ and a map
\begin{equation*}
 (0,r_0) \to B_\rho \times \Omega_1 \times \R : r \mapsto (\tilde\tau(r),\tilde\varphi(r),\tilde\lambda(r))
\end{equation*}
so that
\[
\tilde\Phi(r,\tilde\tau(r),\tilde\varphi(r),\tilde\lambda(r))=0.
\]
Then for all $r\in(0,r_0)$ the surfaces $\Sigma_r:=S(r,\tilde\tau(r),\tilde\varphi(r))$
solve the equation
\begin{equation*}
  \Delta H +H|\Acirc|^2+H \Ric(\nu,\nu)+\tilde\lambda(r) H = 0 
\end{equation*}
as claimed. Up to reparameterization, the family
$(\Sigma_r)_{r\in(0,r_0)}$ is the family of solutions as in
Theorem~\ref{thm:main}, see Corollary~\ref{thm:reparametrize} for
details.

An equivalent way to define $\tilde\Phi(r,\tau,\varphi,\lambda)$ is to
evaluate the operator~\eqref{eq:3} on $S_\varphi$ with respect to the
metric $\tilde g^{r,\tau} := (\phi_\tau\circ \Psi_r)^* g$. To get a
uniform scale in $r$, we consider instead the rescaled metric
$g^{r,\tau} := r^{-2} \tilde g^{r,\tau}$ and define the rescaled
function $\Phi(r,\tau,\varphi,\lambda)$ to be the operator
\begin{equation}
  \label{eq:7}
  \Delta_{r,\tau} H_{r,\tau} + H_{r,\tau}|\Acirc_{r,\tau}|^2+H_{r,\tau} \Ric_{r,\tau}(\nu,\nu)+r^2 \lambda H_{r,\tau}
\end{equation}
evaluated on $S_\varphi$ with respect to $g^{r,\tau}$ and pulled back to
$S_1$ via the parameterization $x\mapsto (1+\varphi(x))x$ of
$S_\varphi$. From the scaling of the geometric quantities, we get
\begin{equation*}
  \Phi(r,\tau,\varphi,\lambda) = r^3 \tilde\Phi(r,\tau,\varphi,\lambda).
\end{equation*}
By definition
\begin{equation}
  \label{eq:6}
  \Phi(r,\tau,0,\lambda)=\mathcal{W}_{r,\lambda}(S_1),
\end{equation}
where $\mathcal{W}_{r,\lambda}(S_1)$ is from
Proposition~\ref{prop:expansion-Willmore} and the geometric quantities
in the expression for $\mathcal{W}_{r,\lambda}(S_1)$ are evaluated at $c(\tau)$.
Note that after shifting by $\tau$, the metric $g$
in Proposition~\ref{prop:expansion-Willmore} corresponds to
the metric $g^{r,\tau}$ here.

The linearization of the Willmore operator $\tilde\Phi$ is denoted by
$W_\lambda$. It was calculated in \cite[Section~3]{LMS:2011}. For a
variation of an arbitrary surface $\Sigma$ with normal speed $f$ it is
given by
\begin{equation}
  \label{eq:9}
  W_\lambda f=LLf+\frac12 \nabla^\star (H^2 \nabla f)-2 \nabla^\star(H\Acirc(\nabla f,\cdot))+\lambda Lf+fQ,
\end{equation}
where $\nabla^\star =- \text{div}$, $L=-\Delta -|A|^2-\Ric(\nu,\nu)$, and
\begin{equation}
  \label{eq:10}
  \begin{aligned}
    Q&= |\nabla H|^2 +2 \omega(\nabla H)+H\Delta H+2\langle \nabla^2 H ,\Acirc \rangle +2 H^2|\Acirc|^2+2H\langle \Acirc,T\rangle\\
    &\quad- H \nabla\Ric(\nu,\nu,\nu)-\frac12 H^2 |A|^2 + \frac12 H^2 \Ric(\nu,\nu).
  \end{aligned}
\end{equation}
Here $\omega = \Ric(\nu,\cdot)^T$ is the tangential projection of the $1$-form $\Ric(\nu,\cdot)$ to $\Sigma$ and $T = R(\cdot,\nu,\nu,\cdot)$.
All the geometric quantities in $W_\lambda$ are evaluated on $\Sigma$
with respect to the corresponding ambient geometry. For given
$f\in C^{4}(S_1)$ the family $t\mapsto S(r,\tau,t f)$ is a normal
variation of $S(r,\tau,0)$ with normal speed $rf$, so that
\begin{equation}
  \label{eq:4}
  \tilde\Phi_\varphi(r,\tau,0,\lambda) f = r W_\lambda f. 
\end{equation}
Here we evaluate $W_\lambda$ with respect to the metric $g$ in
$M$. Rescaling to the $g^{r,\tau}$ metric, we find that
\begin{equation}
  \label{eq:5}
  \Phi_\varphi(r,\tau,0,\lambda) f = r^4 W_\lambda f =
  W_{r,\tau,\lambda} f,
\end{equation}
where $W_{r,\tau,\lambda}$ is the linearized Willmore operator with
respect to $g^{r,\tau}$:
\begin{equation}
  \label{eq:8}
  \begin{aligned}
    W_{r,\tau,\lambda} f
    &= L_{r,\tau}L_{r,\tau} f +\frac12
    \nabla^\star_{r,\tau} (H_{r,\tau}^2\nabla_{r,\tau} f)
    \\
    &\quad
    -2\nabla^\star_{r,\tau}(H_{r,\tau}\Acirc_{r,\tau}(\nabla_{r,\tau}f,\cdot)+r^2\lambda L_{r,\tau}f+Q_{r,\tau}f.
  \end{aligned}
\end{equation}
Here we use the subscript $_{r,\tau}$ to denote quantities evaluated
with respect to the metric $g^{r,\tau}$.

In the limit $r\to 0$ the metric $g^{r,\tau}$ converges to the
Euclidean metric so that in the limit we have
\begin{equation*}
  W_{0,\tau,\lambda} f = L_0(L_0+2)f=(-\Delta) (-\Delta-2)f.
\end{equation*}
The kernel of this operator is given by
\begin{equation*}
  K:=\operatorname{ker} W_{0,\tau,\lambda} = \operatorname{Span} \{ 1,x^1,x^2,x^3\},
\end{equation*}
where the $x^i$ are the standard coordinate functions on $S_1$. We split this kernel into two parts:
\begin{equation}
  \label{eq:1}
    K_0 := \operatorname{Span} \{1\}\quad\text{and}\quad
    K_1 := \operatorname{Span} \{x_1,x_2,x_3\}. 	
\end{equation}
As in \cite{Ye:1991}, the function space $C^{4,\frac12}(S_1)$ splits
as a direct sum into $K$ and its $L^2$-orthogonal complement
$K^\perp$. It is standard to verify that we have the direct sum
decomposition of the target with respect to the $L^2$-scalar product:
\[
C^{0,\frac12}=  K + W_{0,\tau,\lambda}(K^\perp).
\]
Define the $L^2$-orthogonal projection maps
\begin{equation*}
  P_0:C^{0,\frac12}(S_1)\to K_0
  \quad\text{and}\quad
  P_1:C^{0,\frac12}(S_1)\to K_1.
\end{equation*}
The maps $T_0:K_0\to \bR$ and $T_1:K_1\to \bR^3$ identify $K_0$ and
$K_1$ with $\bR$ and $\bR^3$ according to the basis given in
equation~\eqref{eq:1}. Moreover, for $i\in \{0,1\}$ let
$\tilde P_i=T_i\circ P_i$. Denote by $\{e_1,e_2,e_3\}$
the standard basis of $\bR^3$.
\begin{lemma}
  \label{thm:projection-to-kernel}
  We have 
  \begin{align*}
    \tilde P_0 (\Phi(r,\tau,\varphi,\lambda))
    &=  8\pi r^2\left(\lambda+\frac13\Scal(c(\tau))\right)+O(r^4)
      +\tilde P_0 \left(\int_0^1 \Phi_{\varphi}(r,\tau,t\varphi,\lambda)\varphi\, dt\right)
    \intertext{and}
    \tilde P_1 (\Phi(r,\tau,\varphi,\lambda))&=
                                               \frac{4\pi}{3}r^3 \nabla_{e_i}\Scal(c(\tau)) e_i +O(r^5)
                                               +\tilde P_1 \left(\int_0^1 \Phi_{\varphi}(r,\tau,t\varphi,\lambda)\varphi\, dt\right).
  \end{align*}
\end{lemma}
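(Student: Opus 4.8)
The strategy is to write $\Phi(r,\tau,\varphi,\lambda)$ as a first-order Taylor expansion in $\varphi$ around $\varphi=0$, namely
\begin{equation*}
  \Phi(r,\tau,\varphi,\lambda) = \Phi(r,\tau,0,\lambda) + \int_0^1 \Phi_\varphi(r,\tau,t\varphi,\lambda)\varphi\, dt,
\end{equation*}
which holds by the fundamental theorem of calculus applied to $t\mapsto \Phi(r,\tau,t\varphi,\lambda)$ (here one needs $\Phi$ to be $C^1$ in $\varphi$, which follows from the smooth dependence of the geometric quantities entering \eqref{eq:7} on the parameterization). Applying the bounded linear maps $\tilde P_0$ and $\tilde P_1$ and using linearity to pull them inside the integral, the problem reduces to computing $\tilde P_0(\Phi(r,\tau,0,\lambda))$ and $\tilde P_1(\Phi(r,\tau,0,\lambda))$, since the remaining terms are exactly the integral expressions appearing in the statement.

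By \eqref{eq:6}, $\Phi(r,\tau,0,\lambda) = \mathcal{W}_{r,\lambda}(S_1)$, with all geometric quantities evaluated at $c(\tau)$, and Proposition~\ref{prop:expansion-Willmore} gives the explicit expansion of this quantity in powers of $r$. So the computation becomes: integrate each term of that expansion against the kernel functions $1$ (for $\tilde P_0$) and $x^1,x^2,x^3$ (for $\tilde P_1$) over $S_1$ with respect to the round measure. The key facts are the standard spherical integrals: $\int_{S_1} 1 = 4\pi$, $\int_{S_1} x^i = 0$, $\int_{S_1} x^i x^j = \tfrac{4\pi}{3}\delta^{ij}$, $\int_{S_1} x^i x^j x^k = 0$, and $\int_{S_1} x^i x^j x^k x^l = \tfrac{4\pi}{15}(\delta^{ij}\delta^{kl}+\delta^{ik}\delta^{jl}+\delta^{il}\delta^{jk})$; in particular all odd-degree monomials integrate to zero, and trace contractions convert the quartic term's tensor into combinations of $\Scal$, $\Ric$, $\Riem$ contractions.

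For $\tilde P_0$: only the even-order terms survive. The $r^2$ terms give $\int_{S_1}\big(2\lambda - \tfrac23\Scal + 4\Ric_{pq}x^px^q\big) = 4\pi(2\lambda - \tfrac23\Scal) + 4\cdot\tfrac{4\pi}{3}\Scal = 8\pi\lambda + \tfrac{8\pi}{3}\Scal = 8\pi(\lambda + \tfrac13\Scal)$ (using $\Ric_{pq}\delta^{pq}=\Scal$); the $r^3$ terms vanish by oddness; the $r^4$ terms contribute only to the $O(r^4)$ error. For $\tilde P_1$: the $r^2$ and $r^4$ (even-degree) pieces vanish against $x^i$, the cubic $r^4$ piece also vanishes against $x^i$ (odd total degree five... in fact degree-five monomials integrate to zero), and from the $r^3$ terms, $\int_{S_1}(-\Scal_{,p}x^p)x^i = -\tfrac{4\pi}{3}\Scal_{,i}$ while $\int_{S_1}(5\Ric_{pq,s}x^px^qx^s)x^i = 0$ since it has odd total degree — wait, degree four; here one uses $\Ric_{pq,s}$ contracted with $\tfrac{4\pi}{15}(\delta^{pq}\delta^{si}+\delta^{ps}\delta^{qi}+\delta^{pi}\delta^{qs})$, giving a multiple of $\Scal_{,i}$ via the contracted second Bianchi identity $2\Ric^t_{\ i,t} = \Scal_{,i}$. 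Collecting the coefficients of $\Scal_{,i}$ should yield $\tfrac{4\pi}{3}\nabla_{e_i}\Scal$, with the $r^4$ terms absorbed into $O(r^5)$. The main obstacle is bookkeeping: one must carefully verify that the various $r^3$ contributions to $\tilde P_1$ combine (after applying the Bianchi identity) to exactly $\tfrac{4\pi}{3}\Scal_{,i}$ with no residual curvature terms, and likewise that no unexpected $r^2$ or $r^3$ term survives in $\tilde P_0$ beyond the stated leading term. These are finite, explicit algebraic checks using the integral identities above, so no genuine analytic difficulty arises beyond this careful accounting.
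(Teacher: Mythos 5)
Your proposal follows the paper's proof essentially verbatim: Taylor-expand in $\varphi$ via the fundamental theorem of calculus, identify $\Phi(r,\tau,0,\lambda)$ with $\mathcal{W}_{r,\lambda}(S_1)$ through \eqref{eq:6} and Proposition~\ref{prop:expansion-Willmore}, and project term by term using the spherical moment integrals, with odd-degree monomials dropping out. The one step you leave as ``should yield'' does close: $\int_{S_1}\bigl(-\Scal_{,p}x^p+5\Ric_{pq,s}x^px^qx^s\bigr)x^i=\bigl(-\tfrac{4\pi}{3}+\tfrac{8\pi}{3}\bigr)\Scal_{,i}=\tfrac{4\pi}{3}\Scal_{,i}$ by the contracted second Bianchi identity, which is exactly the coefficient in the lemma.
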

\begin{proof}
  Start by writing
  \begin{align*}
    \Phi(r,\tau,\varphi,\lambda)&= \Phi(r,\tau,0,\lambda)+\int_0^1 \Phi_{\varphi}(r,\tau,t\varphi,\lambda)\varphi\, dt.
  \end{align*}
  By~\eqref{eq:6}, for $i\in \{0,1\}$ 
  \[
  \tilde P_i( \Phi(r,\tau,0,\lambda))=\tilde P_i (\mathcal{W}_{r,\lambda}(S_1)),
  \]
  Where $\mathcal{W}_{r,\lambda}(S_1)$ is evaluated at the base point
  $c(\tau)$. The right hand side can be calculated term by term from
  the expansion of $\mathcal{W}_{r,\lambda}(S_1)$ given in
  Proposition~\ref{prop:expansion-Willmore}:
  \begin{align*}
    \tilde P_0(\mathcal{W}_{r,\lambda}(S_1))&= 8\pi r^2\left(\lambda+\frac13 \Scal(c(\tau))\right)+O(r^4)\quad\text{and} \\
    \tilde P_1(\mathcal{W}_{r,\lambda}(S_1))&= \frac{4\pi}{3}r^3 \nabla_{e_i}\Scal(c(\tau))e_i+O(r^5).
  \end{align*}
  Note that all terms that contain an odd number of $x^i$-factors
  integrate to zero. For the other terms we used that
  $\int_{S_1} x^i x^p =\frac{4\pi}{3} \delta_{ip}$ and a similar
  expression for integrals involving four factors of components of
  $x$.
\end{proof}
\begin{lemma}
  \label{thm:derivativePhi1}
  For every $\tau\in\bR^3$ and every $\lambda\in\bR$ we have that
  \begin{equation*}
    \Phi_{\varphi r}(0,\tau,0,\lambda) = \left.\frac{\partial}{\partial r}\right|_{r=0}
    W_{r,\tau,\lambda} = 0.
  \end{equation*}
\end{lemma}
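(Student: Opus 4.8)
The plan is to show that the first $r$-derivative at $r=0$ of the linearized operator $W_{r,\tau,\lambda}$, viewed as a map on $C^{4,\frac12}(S_1)$, vanishes identically. The strategy is to differentiate the explicit formula~\eqref{eq:8} term by term in $r$ and evaluate at $r=0$, using that the metric $g^{r,\tau}$ and all geometric quantities of $S_1$ depend on $r$ only through the rescaled metric. The key structural fact, which I would extract from the expansions in Section~\ref{sec:expansion} (in particular~\eqref{eq:2.4} and~\eqref{eq:2.5}), is that $g^{r,\tau}_{ij}(x) = \delta_{ij} + O(r^2)$: the correction to the Euclidean metric is quadratic in $r$, with no linear term. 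Consequently every geometric quantity built from $g^{r,\tau}$ — the induced metric, the second fundamental form $A_{r,\tau}$, the mean curvature $H_{r,\tau}$, the Christoffel symbols, the Laplacian $\Delta_{r,\tau}$, the Ricci curvature $\Ric_{r,\tau}$, and hence $L_{r,\tau}$ and $Q_{r,\tau}$ — agrees with its Euclidean value up to an error of order $r^2$, so that each of these has vanishing $r$-derivative at $r=0$.

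First I would record the $r=0$ values: $H_{0,\tau} = 2$ (the mean curvature of the unit sphere with our sign convention), $\Acirc_{0,\tau} = 0$ since the round sphere is umbilic, $\Ric_{0,\tau}(\nu,\nu) = 0$, and $L_{0,\tau} = -\Delta - 2$. Next I would differentiate~\eqref{eq:8} in $r$ and group the terms. The summand $r^2 \lambda L_{r,\tau} f$ has vanishing derivative at $r=0$ because of the explicit factor $r^2$. The terms $\tfrac12 \nabla^\star_{r,\tau}(H_{r,\tau}^2 \nabla_{r,\tau} f)$ and $-2\nabla^\star_{r,\tau}(H_{r,\tau}\Acirc_{r,\tau}(\nabla_{r,\tau} f,\cdot))$: the second one vanishes at $r=0$ together with its first derivative because $\Acirc_{0,\tau}=0$ and $\partial_r|_{r=0}\Acirc_{r,\tau}=0$, so its derivative involves $\Acirc_{0,\tau}$ times a derivative of other factors, which is zero. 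For the first one and for $L_{r,\tau}L_{r,\tau}f$ and $Q_{r,\tau}f$, I would apply the product rule and observe that in each resulting term at least one factor is a quantity whose $r$-derivative at $r=0$ vanishes (by the $O(r^2)$ observation above), while the remaining factors are evaluated at $r=0$ and are finite. The term $Q_{r,\tau}f$ needs slightly more care: inspecting~\eqref{eq:10}, at $r=0$ one has $H=2$ constant, $\nabla H = 0$, $\Acirc = 0$, $\omega=0$, $T=0$, $\Ric(\nu,\nu)=0$, so $Q_{0,\tau}$ is a fixed constant (in fact one computes $Q_{0,\tau} = -\tfrac12 H^2|A|^2 = -4$ on $S_1$), and its $r$-derivative picks up only terms each containing a factor with vanishing $r$-derivative at $0$.

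Collecting these observations gives $\partial_r|_{r=0} W_{r,\tau,\lambda} f = 0$ for every $f\in C^{4,\frac12}(S_1)$, which is the claim. The main obstacle is purely organizational rather than conceptual: one must verify carefully that \emph{every} term produced by the product rule contains at least one factor that is $O(r^2)$ near $r=0$, and in particular that no term survives through a product of a single $O(r^2)$ quantity with something that blows up as $r\to 0$. This is guaranteed because all the quantities entering $W_{r,\tau,\lambda}$ are smooth in $r$ down to $r=0$ (the rescaled metric $g^{r,\tau}$ extends smoothly to $r=0$ with value the Euclidean metric, by~\eqref{eq:2.4}), so every factor and every factor's derivative is bounded near $r=0$; the point is simply that the \emph{linear}-in-$r$ part of the metric expansion is absent. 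An alternative, slightly slicker route would be to invoke~\eqref{eq:5}, namely $\Phi_\varphi(r,\tau,0,\lambda) = r^4 W_\lambda$ where $W_\lambda$ is computed with the unscaled metric $g$, together with a parity/scaling argument: the coefficients of $W_{r,\tau,\lambda}$ as functions of $r$ inherit from the metric expansion~\eqref{eq:2.4} an expansion in $r$ whose leading correction to the $r=0$ value is of order $r^2$, so the $O(r)$ coefficient vanishes. I would present the direct term-by-term verification as the primary argument since it is self-contained and transparent.
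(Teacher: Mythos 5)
Your argument is correct and is essentially the paper's own proof: the key input is that $g^{r,\tau}$ deviates from the Euclidean metric only at order $r^2$ (the paper quotes this and its consequences $\left.\partial_r\right|_{r=0}\Delta_{r,\tau}=\left.\partial_r\right|_{r=0}A_{r,\tau}=\left.\partial_r\right|_{r=0}\Ric_{r,\tau}=\left.\partial_r\right|_{r=0}L_{r,\tau}=0$ from Ye's Lemma~1.3), after which one differentiates \eqref{eq:8} term by term via the product rule, exactly as you do. The only cosmetic difference is that the paper groups the divergence-form terms using the Codazzi identity \eqref{eq:23} before inspecting them, whereas you treat them directly, which is equally valid since $\nabla^\star_{r,\tau}$, $H_{r,\tau}$ and $\Acirc_{r,\tau}$ all have vanishing first $r$-derivative at $r=0$.
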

\begin{proof}
  For the proof, we have to calculate
  $\left.\frac{\partial}{\partial r}\right|_{r=0} W_{r,\tau,\lambda}$
  from its expression~\eqref{eq:8} taking into account its
  definition~\eqref{eq:9} and~\eqref{eq:10}. Since we compute
  $\frac{\partial}{\partial r} W_{r,\tau,\lambda}$ at $r=0$ we see
  that all terms that are product of at least two quantities that
  vanish at $(r,\varphi)=(0,0)$ do not contribute to the
  derivative. In particular
  \begin{equation}
    \label{eq:11}
    \begin{aligned}
      \left.\tfrac{\partial}{\partial r}\right|_{r=0} 
      \big(|\nabla H_{r,\tau} |^2 + 2\omega_{r,\tau}(\nabla H_{r,\tau}) + 2\langle
      \nabla^2 H_{r,\tau} ,\Acirc_{r,\tau} \rangle &
      \\ \qquad
      + 2 H_{r,\tau}^2|\Acirc_{r,\tau}|^2+2H_{r,\tau}\langle
      \Acirc_{r,\tau},T_{r,\tau}\rangle + r^2 \lambda L_{r,\tau} \big) & = 0.
    \end{aligned}
  \end{equation}
  From the proof of \cite[Lemma 1.3]{Ye:1991}, we quote equation
  (1.15)
  $\left.\tfrac{\partial}{\partial r}\right|_{r=0} g^{r,\tau} = 0$,
  its consequence
  $\left.\tfrac{\partial}{\partial r}\right|_{r=0} \Delta_{r,\tau} =
  0$,
  equation (1.17)
  $\left.\tfrac{\partial}{\partial r}\right|_{r=0} A_{r,\tau} = 0$,
  $\left.\tfrac{\partial}{\partial r}\right|_{r=0} \Ric_{r,\tau} = 0$,
  and assertion (1), that is
  $\left.\tfrac{\partial}{\partial r}\right|_{r=0} L_{r,\tau} = 0$.
  These identities also imply that
  $\left.\tfrac{\partial}{\partial r}\right|_{r=0} H_{r,\tau} = 0$
  and
  $\left.\tfrac{\partial}{\partial r}\right|_{r=0} \Acirc_{r,\tau} =
  0$.  From these formulas we find that
  \begin{equation*}
    \left.\tfrac{\partial}{\partial r}\right|_{r=0} 
      \big( L_{r,\tau} L_{r,\tau}  +
      2H_{r,\tau}\Delta_{r,\tau} H_{r,\tau} - \tfrac12
      H_{r,\tau}^2|A_{r,\tau}|^2 \big) = 0.
  \end{equation*}
  Since $\Ric_{r,\tau}=O(r^2)$ as in~\eqref{eq:12} and
  $\nabla_{r,\tau}\Ric_{r,\tau}(\nu_{r,\tau},\nu_{r,\tau},\nu_{r,\tau}) = O(r^3)$ by a similar
  argument, also
  \begin{equation*}
    \left.\tfrac{\partial}{\partial r}\right|_{r=0} \big(
    - H_{r,\tau} \nabla_{r,\tau}\Ric_{r,\tau}(\nu_{r,\tau},\nu_{r,\tau},\nu_{r,\tau}) +
    \tfrac12H_{r,\tau}^2 \Ric_{r,\tau}(\nu,\nu) \big) = 0.
  \end{equation*}
  To treat the final remaining terms in $W_{r,\tau,\lambda}$ rewrite it to
  \begin{equation}
    \label{eq:23}
    \begin{split}
      \tfrac12 \nabla_{r,\tau}^*(&H_{r,\tau}^2 \nabla_{r,\tau} f) - 2 \nabla^*_{r,\tau}(H_{r,\tau}
      \Acirc_{r,\tau} (\nabla_{r,\tau} f, \cdot))
      \\
      &
      =
      -H_{r,\tau}\langle \nabla H_{r,\tau}, \nabla_{r,\tau} f \rangle
      -\tfrac12 H_{r,\tau}^2 \Delta_{r,\tau} f
      + 2 \Acirc_{r,\tau}(\nabla H_{r,\tau},\nabla_{r,\tau} f) \\
      &\quad
      + 2 H_{r,\tau}\langle \nabla_{r,\tau}^* \Acirc_{r,\tau}, \nabla_{r,\tau}
      f\rangle + 2 H_{r,\tau} \langle \Acirc_{r,\tau},\nabla_{r,\tau}^2 f\rangle\\
      &
      =
      -\tfrac12 H_{r,\tau}^2 \Delta_{r,\tau} f + 2 \Acirc_{r,\tau}(\nabla
      H_{r,\tau},\nabla_{r,\tau} f) + 2 H_{r,\tau} \langle \Acirc_{r,\tau},\nabla_{r,\tau}^2 f\rangle\\
      &\ \ \  +
      2H_{r,\tau}\omega_{r,\tau}(\nabla_{r,\tau} f).
    \end{split}
  \end{equation}
  In the last equality we used the Codazzi equation in the form
  $-\nabla^* \Acirc = \tfrac12 \nabla H + \omega$. By inspection we
  see that each term in this expression has vanishing derivative in
  $r$-direction. Hence 
  \begin{equation*}
    \left.\frac{\partial}{\partial r}\right|_{r=0} W_{r,\tau,\lambda}f
    = 0
  \end{equation*}
  as claimed.
\end{proof}
Let $\varphi_0$ be the unique solution of the PDE
\begin{equation}
  \label{eq:13}
  W_{0,\tau,\lambda} \varphi_0=\left.\left(-\frac43 \Scal+4   \Ric_{pq}x^px^q\right)\right|_{r=0}.
\end{equation}
Note that the right hand side of this equation is an element of $K^\perp$ and hence $\varphi_0\in K^\perp$ is indeed uniquely defined. 
\begin{lemma}
  \label{thm:solution-lambda-tau}
  Let $(M,g)$ be a three dimensional manifold and $p\in M$ such that
  $\nabla\Scal(p) = 0$ and $\nabla^2 \Scal(p)$ is non-degenerate. For
  this base point there exists $r_0\in(0,\infty)$, an open
  neighborhood $U\subset C^{4,\frac12}(S_1)$ of $\varphi_0$, and
  functions
  \begin{equation*}
    \begin{aligned}
      \lambda : [0,r_0) \times U \to \bR : (r,\varphi) \mapsto \lambda(r,\varphi) \quad\text{and}\quad
      \tau : [0,r_0) \times U \to \bR^3  : (r,\varphi) \mapsto \tau(r,\varphi)
    \end{aligned}
  \end{equation*}
  so that
  \begin{align}
    \label{projection0}
    &\tilde P_i
    \big(\Phi(r,\tau(r,\varphi),r^2\varphi,\lambda(r,\varphi))\big)=0
    \quad\text{for}\quad i\in\{0,1\},
    \\
    & \tau(0,\varphi_0) = 0, \qquad\text{and}\qquad
      \lambda(0,\varphi_0) = -\frac13 \Scal(p).
  \end{align}
 \end{lemma}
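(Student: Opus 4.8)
The plan is to apply the implicit function theorem to the map
\[
G : [0,r_0) \times U \times \bR \times \bR^3 \to \bR \times \bR^3,
\qquad
G(r,\varphi,\lambda,\tau) = \Bigl(\tilde P_0\bigl(\Phi(r,\tau,r^2\varphi,\lambda)\bigr),\ \tilde P_1\bigl(\Phi(r,\tau,r^2\varphi,\lambda)\bigr)\Bigr),
\]
solving for $(\lambda,\tau)$ as functions of $(r,\varphi)$. The first step is to choose the correct base point at $r=0$. From Lemma~\ref{thm:projection-to-kernel}, the term $\int_0^1 \Phi_\varphi(r,\tau,t\varphi,\lambda)\varphi\,dt$ applied to $r^2\varphi$ will, after dividing through by the appropriate power of $r$, contribute via $\Phi_\varphi(0,\tau,0,\lambda) = W_{0,\tau,\lambda}$ acting on $\varphi$; here the choice $\varphi_0$ solving \eqref{eq:13} is what makes the $r^2$-order terms in $\tilde P_0$ cancel (note $\tilde P_0$ of the right-hand side of \eqref{eq:13} is exactly $-\frac43\Scal\cdot 4\pi + 4\cdot\frac{4\pi}{3}\Scal = 0$ by the stated integral identities, consistent with $\varphi_0\in K^\perp$). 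So at $(r,\varphi)=(0,\varphi_0)$ we set $\lambda = -\frac13\Scal(p)$ and $\tau=0$, and one checks $G(0,\varphi_0,-\frac13\Scal(p),0)=0$: the leading $r^2$ and $r^3$ coefficients vanish because $\lambda+\frac13\Scal(p)=0$ and $\nabla\Scal(p)=0$, while the correction term with $\varphi_0$ has no effect at this lowest order once things are rescaled correctly.

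The second and crucial step is to verify the invertibility of the partial derivative $D_{(\lambda,\tau)}G$ at $(0,\varphi_0,-\frac13\Scal(p),0)$. Because $G$ vanishes to order $r^2$ in its first slot and order $r^3$ in its second, the honest move is to factor out these powers — define $\hat G_0 = r^{-2}\tilde P_0(\Phi)$ and $\hat G_1 = r^{-3}\tilde P_1(\Phi)$, extended continuously to $r=0$ using Lemma~\ref{thm:projection-to-kernel} and the expansion of $\mathcal W_{r,\lambda}(S_1)$. Then from Lemma~\ref{thm:projection-to-kernel}, $\partial\hat G_0/\partial\lambda|_{r=0} = 8\pi \neq 0$ (the $\lambda$-dependence of the correction integral term is higher order, using Lemma~\ref{thm:derivativePhi1} to control $\partial_r$ behaviour), and $\partial\hat G_0/\partial\tau|_{r=0} = \frac{8\pi}{3}\nabla\Scal(p) = 0$ since $p$ is critical. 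For the second component, $\partial\hat G_1/\partial\tau|_{r=0} = \frac{4\pi}{3}\nabla^2\Scal(p)$, which is invertible by the non-degeneracy hypothesis, and $\partial\hat G_1/\partial\lambda|_{r=0} = 0$ since the only $\lambda$-dependent term in $\mathcal W_{r,\lambda}(S_1)$ contributing to $\tilde P_1$ is $-\frac{r^4}{3}\lambda\Ric_{pq}x^px^q$, which is $O(r^4) = o(r^3)$, and again the correction term is controlled by Lemma~\ref{thm:derivativePhi1}. Hence $D_{(\lambda,\tau)}\hat G|_{r=0}$ is block-triangular with invertible diagonal blocks $8\pi$ and $\frac{4\pi}{3}\nabla^2\Scal(p)$, so it is invertible.

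The third step is bookkeeping: apply the implicit function theorem in Banach spaces (the dependence on $\varphi\in C^{4,1/2}(S_1)$ is smooth since $\Phi$ is, and the maps $\tilde P_i$ are bounded linear) to obtain $r_0 > 0$, a neighborhood $U$ of $\varphi_0$, and $C^1$ maps $(r,\varphi)\mapsto(\lambda(r,\varphi),\tau(r,\varphi))$ solving $\hat G = 0$, hence \eqref{projection0}, with the stated values at $(0,\varphi_0)$. The main obstacle I anticipate is \emph{not} the abstract IFT step but the careful justification that the correction term $\tilde P_i\bigl(\int_0^1\Phi_\varphi(r,\tau,t r^2\varphi,\lambda)\varphi\,dt\bigr)$, after division by $r^2$ resp.\ $r^3$, extends $C^1$-ly to $r=0$ and has vanishing derivative there in the $\lambda$ and $\tau$ directions to the orders needed — this is exactly where Lemma~\ref{thm:derivativePhi1} (which says $\partial_r W_{r,\tau,\lambda}|_{r=0}=0$) is used, together with the fact that $\varphi$ enters multiplied by $r^2$, so that the whole correction is genuinely of order $r^4$ in $\tilde P_0$ and $r^5$ in $\tilde P_1$ as asserted in Lemma~\ref{thm:projection-to-kernel}; one must track that these error estimates are uniform and differentiable in the parameters on a neighborhood of the base point.
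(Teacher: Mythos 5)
Your proposal is correct and follows essentially the same route as the paper: rescale the $\tilde P_0$- and $\tilde P_1$-projected equations by $r^{-2}$ and $r^{-3}$ respectively, use $\Phi_\varphi(0,\tau,0,\lambda)=W_{0,\tau,\lambda}$ together with Lemma~\ref{thm:derivativePhi1} to show the correction integral is of higher order, and apply the implicit function theorem to the block-triangular Jacobian with diagonal entries $8\pi$ and $\tfrac{4\pi}{3}\nabla^2\Scal(p)$. The only slight misstatement is attributing the cancellation of the $r^2$-order term in $\tilde P_0$ to the particular choice of $\varphi_0$; in fact it comes from setting $\lambda=-\tfrac13\Scal(p)$ (the base point equations hold for arbitrary $\varphi$, since the range of the self-adjoint operator $W_{0,\tau,\lambda}$ is $L^2$-orthogonal to $K$), exactly as you note afterwards, so this does not affect the validity of the argument.
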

\begin{proof}
  Calculate:
  \begin{align*}
    &\Phi_{\varphi}(r,\tau,tr^2\varphi,\lambda)
    \\
    &\quad=
      \Phi_\varphi(0,\tau,0,\lambda)
      + 
      r\int_0^1  \Phi_{\varphi r}(sr,\tau,str^2\varphi,\lambda)\, ds
      + tr^2 \int_0^1\Phi_{\varphi \varphi}(sr,\tau,str^2\varphi,\lambda)\varphi \, ds.
  \end{align*}
  Moreover, we have
  \begin{align*}
    r\int_0^1  \Phi_{\varphi r}(sr,\tau,str^2\varphi,\lambda)\, ds
    &=
      r^2 \int_0^1 \int_0^1 s\Phi_{\varphi r r}(usr,\tau,ustr^2 \varphi,\lambda)\, du \, ds\\
    &\quad
      + r^3\int_0^1 \int_0^1 st \Phi_{\varphi \varphi r} (usr,\tau,ustr^2\varphi,\lambda) \varphi \, du \, ds\\
    &\quad
      +r\Phi_{\varphi r}(0,\tau,0,\lambda).
  \end{align*}
  Hence 
  \begin{align*}
    r^2 \Phi_{\varphi}(r,\tau,tr^2\varphi,\lambda)
    &=
      r^2 \Phi_\varphi(0,\tau,0,\lambda)+r^3 \Phi_{\varphi r}(0,\tau,0,\lambda)\\
    &\quad
      +r^4  \int_0^1 t \Phi_{\varphi \varphi}(sr,\tau,str^2\varphi,\lambda)\varphi \, ds \\
    &\quad
      +r^4 \int_0^1 \int_0^1 s\Phi_{\varphi r r}(usr,\tau,ustr^2 \varphi,\lambda)\, du \, ds +O(r^5).
  \end{align*}
  By equation~\eqref{eq:5} we have
  $\Phi_\varphi(0,\tau,0,\lambda)= W_{0,\tau,\lambda}$ and
  from  Lemma~\ref{thm:derivativePhi1} we get
  $\Phi_{\varphi r}(0,\tau,0,\lambda)=0$. Therefore
  \begin{equation*}
    \tilde P_i\big(r^2
    \Phi_{\varphi}(r,\tau,tr^2\varphi,\lambda)\big)=O(r^4)
    \quad\text{for}\quad i\in \{0,1\}.
  \end{equation*}
 It follows from
  Lemma~\ref{thm:projection-to-kernel} that the
  system~\eqref{projection0} is equivalent to
  \begin{align*}
    8\pi\left(\lambda+\frac13 \Scal\right)
    &= -r^{-2} \tilde P_0 \left(\int_0^1 \Phi_{\varphi}(r,\tau,tr^2\varphi,\lambda)\varphi\, dt\right)+O(r^2) =O(r^2)\ \ \ \text{and}\\
    \frac{4\pi}{3} \nabla_{e_i}\Scal e_i
    &= r \tilde P_1 \left(\int_0^1 \int_0^1 t \Phi_{\varphi \varphi}(sr,\tau,str^2\varphi,\lambda)\varphi \varphi \, ds \, dt\right)\\
    &\quad +r \tilde P_1 \left(\int_0^1 \int_0^1 \int_0^1
      s\Phi_{\varphi r r}(usr,\tau,ustr^2 \varphi,\lambda)\varphi \,
      du \, ds\, dt\right) +O(r^2)
    \\
    &=O(r).
  \end{align*}
  By assumption $\nabla \Scal(p)=0$. Hence, at $r=0$,
  this system is satisfied for an arbitrary $\varphi_0\in K^\perp$, if  
  $\lambda|_{r=0}=-\frac13 \Scal(p)$ and $\tau|_{r=0} = 0$.

  The derivative with respect to $\lambda$ and $\tau$ at $r=0$ of the
  left hand side of this system is given by the matrix
  \[
  \begin{pmatrix}
    8\pi & \frac{8\pi}{3} \nabla \Scal|_{r=0} \\
    0 & \frac{4\pi}{3} \nabla^2 \Scal|_{r=0}
  \end{pmatrix}
  =
  \begin{pmatrix} 8\pi & 0 \\ 0 & \frac{4\pi}{3} \nabla^2 \Scal|_{r=0} \end{pmatrix}.
  \]
  By assumption $\nabla^2 \Scal|_{r=0}$ is non-degenerate. Hence, it
  follows from the implicit function theorem that there exist
  functions $\lambda=\lambda(r,\varphi)$ and $\tau=\tau(r,\varphi)$ as
  claimed at least for $(r,\varphi)$ in a neighborhood of
  $(0,\varphi_0)\in \bR\times C^{\frac12}(S_1)$.
\end{proof}

\begin{lemma}
  \label{thm:solution-final}
  Assume that $(M,g)$, $p$, $\phi_0$, $r_0$, $U$, $\lambda$ and $\tau$ are as in
  Lemma~\ref{thm:solution-lambda-tau}.

  Then there exists $r_1\in(0,r_0]$ and a function
  \begin{equation*}
    \varphi : [0,r_1) \to U : r\mapsto \varphi(r)
  \end{equation*}
  such that
  \begin{equation*}
    \begin{aligned}
      &\Phi(r,\tau(r,\varphi(r)),r^2\varphi(r), \lambda(r,\varphi(r)))
      = 0 \quad\text{and}      
    \end{aligned}
    \varphi(0) = \varphi_0.
  \end{equation*}
\end{lemma}
In particular, for small enough $r$, we have constructed a surface of
Willmore type with Lagrange multiplier $\lambda(r,\varphi(r))$. 
\begin{proof}
  Consider the expansion
  \begin{align*}
    \Phi(r,\tau,r^2\varphi,\lambda)
    &= \Phi(r,\tau,0,\lambda)+r^2 \int_0^1 \Phi_\varphi(r,\tau,tr^2 \varphi,\lambda)\, dt\\
    &= r^2\big(2\lambda -\frac{2}{3}\Scal+ 4\Ric_{pq}x^px^q\big)+O(r^3)+r^2 \Phi_\varphi(0,\tau,0,\lambda)\varphi\\
    &\quad
      +r^4 \int_0^1 \int_0^1 t\Phi_{\varphi \varphi} (sr,\tau,str^2\varphi,\lambda)\varphi \varphi \, dt\, ds\\
    &\quad
      +r^4\int_0^1 \int_0^1 \int_0^1 s \Phi_{\varphi r r}(usr,\tau,ustr^2 \varphi,\lambda)\varphi \, du\, ds\, dt\\
    &\quad
      +r^5\int_0^1 \int_0^1 \int_0^1 st \Phi_{\varphi \varphi r}(usr,\tau,ustr^2 \varphi,\lambda)\varphi \varphi \, du\, ds\, dt,
  \end{align*}
  where we used the fact that $\Phi_{\varphi r}(0,\tau,0,\lambda)=0$
  from Lemma~\ref{thm:derivativePhi1}.

  Since $\Phi_\varphi(0,\tau,0,\lambda)=W_{0,\tau,\lambda}$ as in equation~\eqref{eq:5},
  $\lambda(0,\varphi_0)=-\frac13 \Scal(p)$ and
  \begin{equation*}
    W_{0,\tau, \lambda} \varphi_0 =(-\frac43 \Scal+4
    \Ric_{pq}x^px^q)|_{r=0},
  \end{equation*}
  we conclude with the help of the implicit function theorem that,
  after dividing the above equation by $r^2$, there exists
  $r_1\in(0,r_0]$ and solution $\varphi:[0,r_1)\to U$ as claimed.
\end{proof}


\section{The foliation}
\label{sec:foliation}
In this section we show that the surfaces $\Sigma_r$ indeed are a
foliation of a pointed neighborhood of $p\in M$. The method used is
very close to the arguments in \cite[pp. 390--391]{Ye:1991}. We start
with the following observation.
\begin{lemma}
  \label{thm:Phi-phi-r-r-is-even}
  The operator $\Phi_{\varphi r r} (0,\tau,0,\lambda)$ maps even
  functions to even functions.
\end{lemma}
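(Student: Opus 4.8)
The plan is to exploit the structure of $W_{r,\tau,\lambda}$ as an explicit differential operator built out of the geometric quantities of $g^{r,\tau}$ on $S_\varphi$, and to track how each building block transforms under the antipodal map $x\mapsto -x$ on $S_1$. First I would fix the reflection $\iota:S_1\to S_1$, $\iota(x)=-x$, and say that a function $h$ on $S_1$ is \emph{even} if $h\circ\iota = h$ and \emph{odd} if $h\circ\iota = -h$. The key point is that the Euclidean metric on $\bR^3$ (and hence on $S_1$) is $\iota$-invariant, so the round Laplacian $\Delta^{S_1}$ preserves parity, as do multiplication operators by even functions and the maps $\nabla^\star(h\,\nabla\cdot)$ for even $h$. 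Thus the base operator $W_{0,\tau,\lambda}=(-\Delta)(-\Delta-2)$ preserves parity, and the claim for $\Phi_{\varphi rr}(0,\tau,0,\lambda)$ should follow once we show that differentiating twice in $r$ at $r=0$ only introduces \emph{even} coefficient functions.

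The main computation is therefore to examine the expansion in $r$ of each coefficient appearing in $W_{r,\tau,\lambda}$ from~\eqref{eq:8}: the operator $L_{r,\tau}$, the quantities $H_{r,\tau}$, $\Acirc_{r,\tau}$, $\Ric_{r,\tau}(\nu,\nu)$, $\omega_{r,\tau}$, $Q_{r,\tau}$, and the metric coefficients $g^{r,\tau}_{ij}$ entering $\Delta_{r,\tau}$ and $\nabla_{r,\tau}$. From the expansions used in section~\ref{sec:expansion} — in particular~\eqref{eq:2.4}, \eqref{eq:2.8}, \eqref{eq:2.9} and~\eqref{eq:12} — each such quantity, as a function of $x\in S_1$, has an expansion in powers of $r$ whose coefficient of $r^k$ is a homogeneous polynomial in $x$ of degree with the same parity as $k$ (the metric perturbation at order $r^k$ is $O(|x|^k)$, $H$ picks up terms $r^2$ times a quadratic, $r^3$ times a cubic, etc.). Consequently the coefficient of $r^2$ in each building block is an \emph{even} function of $x$. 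Since $\Phi_{\varphi rr}(0,\tau,0,\lambda)$ is, by Leibniz, a sum of terms each of which is the base operator $W_{0,\tau,\lambda}$ with one or two of its coefficients replaced by an $r$-derivative evaluated at $0$, and since by Lemma~\ref{thm:derivativePhi1} all \emph{first} $r$-derivatives vanish, the only surviving contributions are those where a single coefficient is replaced by its \emph{second} $r$-derivative at $0$ — which is even. Hence $\Phi_{\varphi rr}(0,\tau,0,\lambda)$ is a differential operator all of whose coefficients are even functions of $x$, built from $\Delta^{S_1}$, multiplication by even functions, and first-order terms of the form (even function)$\cdot\partial$ that also respect parity; therefore it maps even functions to even functions.

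Concretely, the order of steps would be: (i) record that $\iota^*$ commutes with $\Delta^{S_1}$ and with multiplication by even functions, and that a first-order operator $a^i\partial_i$ with $a^i$ odd maps even to even while $a^i$ even maps even to odd — then note that in $W_{r,\tau,\lambda}$ the first-order terms always come paired with an odd number of derivatives of a geometric quantity, so that parity is preserved when the coefficient functions are even; (ii) invoke the expansions from section~\ref{sec:expansion} to see that $\partial_r^2|_{r=0}$ of $g^{r,\tau}_{ij}$, $g_{r,\tau}^{ij}$, $\Gamma^{r,\tau}$-type terms, $H_{r,\tau}$, $\Acirc_{r,\tau}$, $\Ric_{r,\tau}(\nu,\nu)$, $\omega_{r,\tau}$ and $Q_{r,\tau}$ are all even functions of $x$; (iii) apply the product rule to~\eqref{eq:8}, using Lemma~\ref{thm:derivativePhi1} to kill the cross terms involving two first derivatives, leaving only single-coefficient second-derivative terms; (iv) conclude that the resulting operator has even coefficients and hence preserves parity. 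I expect step (i) — bookkeeping the interaction of the $\nabla_{r,\tau}$-divergence terms with the reflection, i.e. checking that the lower-order terms produced by the Codazzi rewriting~\eqref{eq:23} of the $H^2\nabla f$ and $H\Acirc(\nabla f,\cdot)$ pieces still have even coefficients — to be the only place requiring genuine care; everything else is a direct consequence of the parity of the coefficients in the Riemann-normal-coordinate expansion of the metric.
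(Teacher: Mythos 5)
Your strategy is the same as the paper's: reduce $\Phi_{\varphi rr}(0,\tau,0,\lambda)=\drr W_{r,\tau,\lambda}$ via Leibniz and the vanishing of all first $r$-derivatives (Lemma~\ref{thm:derivativePhi1}) to terms in which a single coefficient of the flat operator is replaced by its second $r$-derivative, and then read off the parity of those coefficients from the normal-coordinate expansion. The one point that needs correcting is your step (ii): it is \emph{not} true that $\drr\omega_{r,\tau}$, or the contracted Christoffel symbols, or $\nabla_{r,\tau}H_{r,\tau}$, have even second $r$-derivatives. Their components are homogeneous of \emph{odd} degree in $x$ (e.g.\ the paper computes $\drr\omega_{r,\tau}=\Ric_{c(\tau)}x$, an odd one-form, and $g^{ij}\Gamma^k_{ij}=\tfrac23 r^2\Ric^k_{\ p}x^p+O(r^3)$). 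Taken literally, your step (ii) combined with your own rule from step (i) (``$a^i\partial_i$ with $a^i$ even maps even to odd'') would make the term $H_{0,\tau}\bigl(\drr\omega_{r,\tau}\bigr)(\nabla_{0,\tau}f)$ an \emph{odd} function for even $f$, contradicting the lemma. The correct bookkeeping — which your step (i) already contains in principle — is that the first-order coefficients are odd, $\nabla_{0,\tau}$ maps even functions to odd vector fields, and the pairing of odd with odd is even; this is exactly how the paper treats the $H\omega(\nabla f)$ term and the Codazzi-rewritten divergence terms in~\eqref{eq:23}. With step (ii) amended to classify the zeroth-order coefficients ($H$, $\Acirc$, $\Ric(\nu,\nu)$, $|A|^2$, $\Delta_{r,\tau}$, $Q_{r,\tau}$) as even and the first-order coefficients as odd, your argument goes through and coincides with the paper's proof.
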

\begin{proof}
  Note that  $\Phi_{\varphi r r} (0,\tau,0,\lambda)= \drr W_{r,\tau,\lambda}$
  where $W_{r,\tau,\lambda}$ is given by the expression in
  equation~\eqref{eq:8}. To prove the claim we check this expression
  term by term as in the proof of Lemma~\ref{thm:derivativePhi1}.
  
  We start by quoting from~\cite[Lemma 1.3]{Ye:1991} that $\drr
  L_{r,\tau}$ is an even operator. Hence, the claim follows from the
  facts that $\drr Q_{r,\tau}$ is an even function and in conjunction
  with equation~\eqref{eq:23} from the fact that the operator
  \begin{equation}
    \label{eq:14}
    f\mapsto
    -\tfrac12 H_{r,\tau}^2 \Delta_{r,\tau} f
    + 2 \Acirc_{r,\tau}(\nabla H_{r,\tau},\nabla_{r,\tau} f)
    + 2 H_{r,\tau} \langle \Acirc_{r,\tau},\nabla_{r,\tau}^2 f\rangle
    + 2 H_{r,\tau}\omega_{r,\tau}(\nabla_{r,\tau} f)
  \end{equation}
  maps even functions to even functions.

  To show this, we quote from the proof of~\cite[Lemma 1.3]{Ye:1991}
  that $\drr \Delta_{r,\tau}|_{r=0}$ is an even operator,
  $\drr\Ric_{r,\tau}(\nu_{r,\tau},\nu_{r,\tau})$ is an even
  function, $\drr |A_{r,\tau}|^2$ is an even function. Note that
  \cite[Equation (1.17)]{Ye:1991} implies that $\drr A_{r,\tau}$ is
  even, so that also $\drr H_{r,\tau}$ and $\drr \Acirc_{r,\tau}$ are even.

  Using these, it is easy to check that
  \begin{equation}
    \label{eq:22}
    \begin{aligned}
      &\drr\big(
      H_{r,\tau}\Delta_{r,\tau} H_{r,\tau}
      + 2\langle \nabla_{r,\tau}^2 H_{r,\tau} ,\Acirc_{r,\tau} \rangle 
      +2H_{r,\tau}\langle \Acirc_{r,\tau},T_{r,\tau}\rangle
      \\
      &\ \ \ \ 
      - H_{r,\tau} \nabla_{r,\tau} \Ric_{r,\tau}(\nu_{r,\tau},\nu_{r,\tau},\nu_{r,\tau})
      + H_{r,\tau}^2|\Acirc_{r,\tau}|^2
      + \frac12 H_{r,\tau}^2 \Ric_{r,\tau}(\nu_{r,\tau},\nu_{r,\tau})
      \big)
    \end{aligned}
  \end{equation}
  is even. For example consider (we omit the subscript $_{r,\tau}$ for
  clarity in the notation:
  \begin{equation*}
    \begin{aligned}
      \drr\!\! H\Delta H    
      &=\left(\drr H\right)\Delta H_{0,\tau} + \left((\dd{r} H) (\dd{r} \Delta H)\right)|_{r=0} + H_{0,\tau} \drr (\Delta H)
      \\
      &= H_{0,\tau} \left( 2(\dr\Delta) (\dr H) +  (\drr \Delta) H +
      \Delta_{0,\tau} \drr H \right)
      \\
      &= H_{0,\tau} (\drr \Delta_{r,\tau}) H_{0,\tau} + H_{0,\tau}
      \Delta_{0,\tau} \big( \drr H_{r,\tau} \big).
    \end{aligned}
  \end{equation*}
  In the second and third equality we used from the proof of
  Lemma~\ref{thm:derivativePhi1} that $\dr H_{r,\tau}=0$ and
  the fact that $H_{0,\tau}$ is constant. The right hand side is even,
  since $H_{0,\tau}$ is constant and thus even, since
  $\drr \Delta_{r,\tau}$ and $\Delta_{0,\tau}$ map even functions to
  even functions and since the product of even functions is even. The
  calculation for the other terms in~\eqref{eq:22} is similar. To
  treat the term
  $H_{r,\tau} \nabla_{r,\tau}
  \Ric_{r,\tau}(\nu_{r,\tau},\nu_{r,\tau},\nu_{r,\tau})$
  use that
  $\nabla_{r,\tau}\Ric_{r,\tau}(\nu_{r,\tau},\nu_{r,\tau},\nu_{r,\tau})=O(r^3)$.

  For the remaining terms in $Q_{r,\tau}$ note that the
  $\dr(\nabla_{r,\tau})$ is a first order differential operator that
  vanishes on constant functions. Hence,
  \begin{equation*}
    \begin{aligned}
      &\tfrac{1}{2}\drr |\nabla_{r,\tau} H_{r,\tau}|^2
      \\
      &
      =
      \left\langle\! \drr\!\!(\nabla_{r,\tau} H_{r,\tau}) ,\nabla_{0,\tau} H_{0,\tau} \right\rangle 
      +
      \left|(\dr\!\nabla_{r,\tau}) H_{0,\tau} + \nabla_{0,\tau} (\dr H_{r,\tau})\right|^2 = 0.
    \end{aligned}
  \end{equation*}
  This follows, since $H_{0,\tau}$ is constant,
  $\dr(\nabla_{r,\tau})= H_{0,\tau}$ and $\nabla_{0,\tau} H_{0,\tau}
  =0$ and since $\dr H_{r,\tau} =0$ as in
  Lemma~\ref{thm:derivativePhi1}. A similar computation yields that $\drr
  \omega_{r,\tau}(\nabla_{r,\tau} H) = 0$. We established that $\drr
  Q_{r,\tau}$ is an even function.

  It remains to consider the expression in~\eqref{eq:14}. Note that
  the first term is even by reasoning as above. The second term
  satisfies
  \begin{equation*}
    \drr \left( \Acirc_{r,\tau}(\nabla H_{r,\tau},\nabla_{r,\tau}
      f)\right) = 0.    
  \end{equation*}
  To treat the third term, use $\Acirc_{0,\tau}=0$ and
  $\dr\Acirc_{0,\tau} =0$ to compute
  \begin{equation*}
    \drr H_{r,\tau} \langle \Acirc_{r,\tau},\nabla_{r,\tau}^2 f\rangle
    =
    H_{0,\tau} \langle \drr \Acirc_{r,\tau} , \nabla_{0,\tau}^2 f\rangle.
  \end{equation*}
  Note that $\drr \Acirc_{r,\tau}$ is even and $\nabla_{0,\tau}^2$
  maps even functions to even functions so that this operator also has
  the desired property.

  For the last term from~\eqref{eq:14} we compute using
  $\omega_{0,\tau} =0$, $\dr\omega_{r,\tau}=0$ and $\dr H_{r,\tau} =0$
  that:
  \begin{equation*}
    \drr H_{r,\tau} \omega_{r,\tau} (\nabla_{r,\tau} f)
      =
      H_{0,\tau} \left(\drr\omega_{r,\tau}\right) \nabla_{0,\tau} f.
  \end{equation*}
  Note that $\nabla_{0,\tau}$ is the tangential gradient on $S^2$ and
  maps even functions to odd vector fields. Furthermore, by
  equation~\eqref{eq:12} and the fact that $\nu_{r,\tau} = x + O(r^2)$
  we have that
  \begin{equation*}
    \omega_{r,\tau} = r^2 \Ric_{c(\tau)} x + O(r^3) 
  \end{equation*}
  so that
  \begin{equation*}
    \drr \omega_{r,\tau} = \Ric_{c(\tau)} x
  \end{equation*}
  and hence $\drr \omega_{r,\tau}$ is an odd one form. Consequently
  $H_{0,\tau} \left(\drr\omega_{r,\tau}\right) \nabla_{0,\tau} f$ is
  an even function whenever $f$ is even. This concludes the proof.
\end{proof}
\begin{lemma}
  \label{thm:tau-is-o-r2}
  For $r\in(0,r_1)$ let
  $\Sigma_r := S(r,\tau(r,\varphi(r)),\varphi(r))$ be as in
  Lemma~\ref{thm:solution-final}.  Then  $\tau(r)=O(r^2)$ as $r\to 0$.
\end{lemma}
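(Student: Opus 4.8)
The plan is to push the parity (under $x\mapsto-x$ on $S_1$) bookkeeping of Lemmas~\ref{thm:derivativePhi1} and~\ref{thm:Phi-phi-r-r-is-even} one step further, as in \cite[pp.~390--391]{Ye:1991}. Abbreviate $\tau(r):=\tau(r,\varphi(r))$ and $\lambda(r):=\lambda(r,\varphi(r))$, with $\varphi(r)$ as in Lemma~\ref{thm:solution-final}. From the computation in the proof of Lemma~\ref{thm:solution-lambda-tau}, the equation $\tilde P_1\big(\Phi(r,\tau(r),r^2\varphi(r),\lambda(r))\big)=0$ is equivalent to
\[
\tfrac{4\pi}{3}\,\nabla_{e_i}\Scal(c(\tau(r)))\,e_i=r\,R(r)+O(r^2),
\]
where $R(r)$ is the value, at the arguments $(\tau(r),\lambda(r),\varphi(r))$ and the rescaled radius and graph occurring there, of the sum of the two iterated integrals of $\Phi_{\varphi\varphi}$ and $\Phi_{\varphi rr}$. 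Since $\Phi$ is a smooth operator between the relevant Hölder spaces and $\tau,\lambda,\varphi$ are $C^1$ by the implicit function theorem, $r\mapsto R(r)$ is $C^1$; hence it suffices to show $R(0)=0$, for then $R(r)=O(r)$, so $\nabla\Scal(c(\tau(r)))=O(r^2)$, and the claim will follow from the non-degeneracy of $\nabla^2\Scal(p)$.

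To prove $R(0)=0$ I would first observe that $\varphi_0$ is even: using $|x|^2=1$ on $S_1$, the right hand side of~\eqref{eq:13} equals the degree-$2$ spherical harmonic $4\big(\Ric_{pq}-\tfrac13\Scal\,\delta_{pq}\big)x^px^q$, which lies in $K^\perp$ and is even, and since $W_{0,\tau,\lambda}=(-\Delta)(-\Delta-2)$ commutes with $x\mapsto-x$, its unique preimage $\varphi_0\in K^\perp$ is even as well. Evaluating the arguments of $R$ at $r=0$ gives $\tau(0)=0$, $\lambda(0)=-\tfrac13\Scal(p)$ and
\[
R(0)=\tfrac12\,\tilde P_1\big(\Phi_{\varphi\varphi}(0,0,0,\lambda(0))(\varphi_0,\varphi_0)\big)+\tfrac12\,\tilde P_1\big(\Phi_{\varphi rr}(0,0,0,\lambda(0))\varphi_0\big).
\]
By Lemma~\ref{thm:Phi-phi-r-r-is-even}, $\Phi_{\varphi rr}(0,0,0,\lambda(0))\varphi_0$ is even; and since at $r=0$ the metric $g^{0,\tau}$ is Euclidean and $S_1$ is invariant under the isometry $x\mapsto-x$, the same kind of term-by-term check as in Lemma~\ref{thm:Phi-phi-r-r-is-even} (or, more directly, the identity $\Phi(0,\tau,\varphi\circ(-\mathrm{id}),\lambda)=\Phi(0,\tau,\varphi,\lambda)\circ(-\mathrm{id})$ differentiated twice in $\varphi$ at $\varphi=0$) shows that $\Phi_{\varphi\varphi}(0,0,0,\lambda(0))$ maps pairs of even functions to even functions, so $\Phi_{\varphi\varphi}(0,0,0,\lambda(0))(\varphi_0,\varphi_0)$ is even. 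Finally every even function on $S_1$ is $L^2$-orthogonal to $K_1=\operatorname{Span}\{x^1,x^2,x^3\}$ (substitute $x\mapsto-x$ in the integral), so $\tilde P_1$ annihilates both terms and $R(0)=0$.

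For the final step, since $\nabla\Scal(p)=0$ and $c(s)=\exp_p(s)$, a Taylor expansion in normal coordinates gives $\nabla_{e_i}\Scal(c(s))=\nabla^2_{ij}\Scal(p)\,s^j+O(|s|^2)$ with $\nabla^2\Scal(p)$ invertible, and since $\tau(r)\to\tau(0)=0$ the quadratic error is absorbed, so $|\tau(r)|\le C\,|\nabla\Scal(c(\tau(r)))|=O(r^2)$. The one place where something genuinely has to be checked is the vanishing $R(0)=0$: it rests on the two parity facts that $\varphi_0$ is even and that $\Phi_{\varphi\varphi}(0,0,0,\lambda(0))$ is parity preserving, the latter being of exactly the type already verified for $\Phi_{\varphi rr}$ in Lemma~\ref{thm:Phi-phi-r-r-is-even}; everything else is the order-counting already done in Lemma~\ref{thm:solution-lambda-tau} together with a one-line use of the invertible Hessian.
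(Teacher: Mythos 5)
Your proof is correct and follows essentially the same route as the paper: its entire content is the two parity facts that $\tilde P_1$ annihilates $\Phi_{\varphi\varphi}(0,\tau,0,\lambda)(\varphi_0,\varphi_0)$ and $\Phi_{\varphi rr}(0,\tau,0,\lambda)\varphi_0$ (evenness of $\varphi_0$, reflection symmetry of the Euclidean operator, and Lemma~\ref{thm:Phi-phi-r-r-is-even}), which is exactly the paper's condition~\eqref{eq:21}. The only cosmetic difference is that you conclude by estimating $\nabla\Scal(c(\tau(r)))=O(r^2)$ directly against the invertible Hessian, whereas the paper compresses this bookkeeping into the implicit-function-theorem statement that $\tau'(r)=O(r)$.
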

\begin{proof}
  It follows from the implicit function theorem that $\tau'(r)=O(r)$
  if and only if
  \begin{equation}
    \label{eq:21}
    \tilde P_1 (  \Phi_{\varphi \varphi}(0,\tau,0,\lambda)\varphi_0
    \varphi_0)=0
    \quad\text{and}\quad
    \tilde P_1 (\Phi_{\varphi r r} (0,\tau,0,\lambda)\varphi_0)=0.
  \end{equation}
  To establish the first identity, note that by the fact that
  equation~\eqref{eq:13} has unique solutions and since
  $W_{0,\tau,\lambda}$ is invariant under the reflection at the
  origin, it follows that $\varphi_0$ is an even function.

  Furthermore, for every $t$ in a neighborhood of $0$ the euclidean
  Willmore operator $\Phi(0,\tau,t\varphi_0,\lambda)$ evaluates to an
  even function. Hence also
  \begin{equation*}
    \Phi_{\varphi \varphi}(0,\tau,0,\lambda)\varphi_0 \varphi_0
    =
    \left.\frac{\partial^2}{\partial t^2}\right|_{t=0} \Phi(0,\tau,t\varphi_0,\lambda)
  \end{equation*}
  is even. Since $\tilde P_1$ vanishes on even functions, the first
  claim from \eqref{eq:21} follows.

  To prove the second identity, note that by
  Lemma~\ref{thm:Phi-phi-r-r-is-even} the operator
  $\Phi_{\varphi r r} (0,\tau,0,\lambda)$ maps even function to even
  functions and the claim follows in a similar manner.
\end{proof}

This lemma implies in particular that we can reparameterize the
solutions that we found in section~\ref{sec:equation} by their area.
\begin{corollary}
  \label{thm:reparametrize}
  For $r\in(0,r_1)$ let
  $\Sigma_r := S(r,\tau(r,\varphi(r)),r^2\varphi(r))$ be as in
  Lemma~\ref{thm:solution-final}. Consider the area of $\Sigma_r$ in
  $(M,g)$ as a function of $r$:
  \begin{equation*}
    a :(0,r_1) \to (0,\infty) : r \mapsto \int_{\Sigma_r} 1 \dmu_g.
  \end{equation*}
  Then there exists $r_2\in(0,r_1]$ so that $a$ is strictly increasing
  on $(0,r_2)$. In particular:
  \begin{equation*}
    a(r) = 4\pi r^2 + O(r^4)
    \quad\text{and}\quad
    a'(r) = 8\pi r + O(r^3).
  \end{equation*}
\end{corollary}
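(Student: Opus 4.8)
The plan is to pass to the rescaled picture of section~\ref{sec:equation}, in which the area of $\Sigma_r$ becomes a function of $r$ that extends smoothly to $r=0$, and then to read off its Taylor expansion. Write $\tau(r):=\tau(r,\varphi(r))$, so that $\Sigma_r=(\phi_{\tau(r)}\circ\Psi_r)(S_{r^2\varphi(r)})$. Since $(\phi_{\tau(r)}\circ\Psi_r)^*g=\tilde g^{\,r,\tau(r)}=r^2 g^{r,\tau(r)}$ and the area is a two-dimensional integral, one has $a(r)=r^2\,b(r)$ with $b(r):=\operatorname{Area}_{g^{r,\tau(r)}}(S_{r^2\varphi(r)})$. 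Because $(M,g)$ is smooth, the rescaled metric $g^{r,\tau}$ depends smoothly on $(r,\tau)$ down to $r=0$, the implicit function theorem in Lemmas~\ref{thm:solution-lambda-tau} and~\ref{thm:solution-final} yields $\tau(r)$ and $\varphi(r)$ depending smoothly on $r$, and the area of the radial graph $S_\psi$ depends smoothly on $\psi$; hence $b\in C^\infty([0,r_1))$, and $a(r)=r^2 b(r)$ extends smoothly to $[0,r_1)$ with $a(0)=0$.

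The next step is to compute $b(0)$ and $b'(0)$. At $r=0$ the rescaled metric $g^{0,\tau}$ is the flat Euclidean metric on $\bR^3$ (all curvature corrections in the expansions of section~\ref{sec:expansion} carry a positive power of $r$), so in particular $g^{0,\tau}$ does not depend on $\tau$; moreover $r^2\varphi(r)$ vanishes at $r=0$, so $S_{r^2\varphi(r)}$ equals $S_1$ when $r=0$, and therefore $b(0)=\operatorname{Area}_{\mathrm{eucl}}(S_1)=4\pi$. To compute $b'(0)$, differentiate $b(r)=\operatorname{Area}_{g^{r,\tau(r)}}(S_{r^2\varphi(r)})$ by the chain rule; the three resulting contributions all vanish at $r=0$. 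The contribution from the $r$-dependence of the metric vanishes because $\dr g^{r,\tau}=0$ (the identity already used in the proof of Lemma~\ref{thm:derivativePhi1}), so that the $r$-derivative of the area element is zero at $r=0$ for any fixed surface. The contribution from the moving basepoint vanishes because $\operatorname{Area}_{g^{0,\tau}}=\operatorname{Area}_{\mathrm{eucl}}$ does not depend on $\tau$, so the $\tau$-derivative of the area at $r=0$ is zero, regardless of $\tau'(0)$ (and in fact $\tau(r)=O(r^2)$ by Lemma~\ref{thm:tau-is-o-r2}). Finally, the contribution from the surface vanishes because $\dr(r^2\varphi(r))=0$. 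Hence $b'(0)=0$.

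From $b(0)=4\pi$, $b'(0)=0$ and the smoothness of $b$ one obtains $b(r)=4\pi+O(r^2)$ and $b'(r)=O(r)$, and therefore
\begin{equation*}
  a(r)=r^2 b(r)=4\pi r^2+O(r^4),\qquad a'(r)=2r\,b(r)+r^2\,b'(r)=8\pi r+O(r^3).
\end{equation*}
In particular $a'(r)=8\pi r\,(1+O(r^2))$ is strictly positive for $r\in(0,r_2)$ provided $r_2\in(0,r_1]$ is chosen small enough, so $a$ is strictly increasing on $(0,r_2)$.

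The computation is essentially bookkeeping once the scaling identity $a(r)=r^2 b(r)$ is set up; the only point that really matters is the vanishing $b'(0)=0$, since this is exactly what kills a possible $O(r^2)$ term in $a'(r)$ and yields the sharp error $O(r^3)$. Its three ingredients --- $\dr g^{r,\tau}=0$, the $\tau$-independence of the Euclidean area, and the quadratic vanishing of the perturbation $r^2\varphi(r)$ --- are all already available, so I do not anticipate a genuine obstacle here beyond tracking the powers of $r$ carefully.
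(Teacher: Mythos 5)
Your proof is correct, but it proceeds differently from the paper. The paper computes $a'(r)$ via the first variation of area, $a'(r) = -\int_{\Sigma_r} g(\vec H, X)\,d\mu_g$, and then feeds in the expansions $H_{\Sigma_r} = \tfrac{2}{r}+O(r)$, $\nu_{\Sigma_r}=\nu_{S_r(\tau(r))}+O(r^3)$, $X^\perp = \partial_{r_\tau}+O(r^2)$ and $|S_r(\tau(r))|=4\pi r^2+O(r^4)$; for the estimate on $X^\perp$ it invokes Lemma~\ref{thm:tau-is-o-r2} ($\tau(r)=O(r^2)$). You instead work entirely in the rescaled picture: $a(r)=r^2 b(r)$ with $b(r)=\operatorname{Area}_{g^{r,\tau(r)}}(S_{r^2\varphi(r)})$ smooth down to $r=0$, and the whole statement reduces to $b(0)=4\pi$ and $b'(0)=0$, the latter following from $\left.\tfrac{\partial}{\partial r}\right|_{r=0} g^{r,\tau}=0$, the $\tau$-independence of the Euclidean limit metric, and $\left.\tfrac{d}{dr}\right|_{r=0}(r^2\varphi(r))=0$. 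This is a clean and more elementary bookkeeping argument; notably it does not use Lemma~\ref{thm:tau-is-o-r2} at all (as you observe, the $\tau$-derivative of the area vanishes at $r=0$ regardless of $\tau'(0)$), whereas the paper's route leans on it, and it trades the geometric first-variation identity for the chain rule plus the smoothness in $r$ of $\tau(r)$, $\varphi(r)$ and $g^{r,\tau}$ supplied by the implicit function theorem — a regularity the paper also uses when it asserts that $a$ extends smoothly to $r=0$.
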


\begin{proof}
  Note that $a$ extends as a smooth function to $r=0$ so that $a(0)=0$
  and hence the first claim follows from the second. We first note that
  \begin{equation*}
    a'(r) = - \int_{\Sigma_r} g(\vec{H}, X) \dmu_g\ ,
  \end{equation*}
  where $X$ is the variation vector-field along this family. Note that
  $X$ is not unique, whereas $X^\perp$ is well defined. Recall that
  from Lemma \ref{thm:solution-final} we have that $\Sigma_r$ is an
  exponential normal graph over $S_r(\tau(r))$ with height function
  $r^3\varphi(r)$ such that $\varphi(r) \rightarrow \varphi_0$ as
  $r \rightarrow 0$. Furthermore, by Lemma \ref{thm:tau-is-o-r2} we
  have that $\tau(r) = O(r^2)$ as $r \to 0$. This implies that
  $$ X^\perp\big|_{\Sigma_r} = \frac{\partial}{\partial r_\tau} + O(r^2)$$
  where $r_\tau = d_g(\tau(r), \cdot)$. Furthermore, by the above and
  \eqref{eq:2.8} we have that
  $$ H_{\Sigma_r} = H_{S_r(\tau(r))} + O(r^2) = \frac{2}{r} + O(r)\, ,$$
  as well as
  $$ \nu_{\Sigma_r} = \nu_{S_r(\tau(r))} + O(r^3)\, .$$
  Also note that from \eqref{eq:2.4} we have 
$$ \int_{S_r(\tau(r))} 1 \dmu_g = 4\pi r^2 + O(r^4)\ .$$
This implies
  \begin{equation*}
    a'(r) = - \int_{\Sigma_r} g(\vec{H}, X) \dmu_g =  8\pi r + O(r^3)\, .
  \end{equation*}
\end{proof}
Due to Corollary~\ref{thm:reparametrize} there exists $a_0\in
(0,\infty)$ and a map $\tilde r : (0,a_0) \to (0,r_2)$ such that
$|\Sigma_{\tilde r(a)}| = a$. We slightly abuse notation by letting
\begin{equation}
  \label{eq:24}
  \Sigma_a := \Sigma_{\tilde r(a)} \quad\text{for}\quad a\in(0,a_0)
\end{equation}
This finishes the existence part of the proof of
Theorem~\ref{thm:main}. To complete the proof, it remains to show the
following:
\begin{proposition}
  \label{thm:is-foliation}
  For $r\in(0,r_1)$ let
  $\Sigma_r := S(r,\tau(r,\varphi(r)),\varphi(r))$ be the surfaces from
  Lemma~\ref{thm:solution-final}. Then there exist $r_2\in(0,r_1]$ so
  that the family $\{\Sigma_r\}_{r\in(0,r_1)}$ is a foliation of a
  pointed neighborhood of $p$.
\end{proposition}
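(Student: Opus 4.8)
The plan is to show that the obvious parameterization of the family of leaves is a diffeomorphism onto a pointed neighborhood of $p$. For $r\in(0,r_1)$ write the leaf as the image of the map $\omega\mapsto F_{\tau(r,\varphi(r))}\bigl(r\,(1+r^2\varphi(r)(\omega))\,\omega\bigr)$ for $\omega\in S_1$, and collect these into a single map $\Theta\colon[0,r_1)\times S_1\to M$ by declaring $\Theta(r,\omega)$ to be this point for $r>0$ and $\Theta(0,\omega)=p$. Then $\Theta$ is smooth on $(0,r_1)\times S_1$, continuous up to $r=0$, and $\Theta(\{r\}\times S_1)=\Sigma_r$. It therefore suffices to produce $r_2\in(0,r_1]$ such that $\Theta$ restricted to $(0,r_2)\times S_1$ is an injective immersion: since domain and target then have equal dimension, invariance of domain shows $\Theta$ is an open map, hence a diffeomorphism onto an open set $V=\bigcup_{r\in(0,r_2)}\Sigma_r$, which exhibits the $\Sigma_r$ as a foliation of $V$; a short additional argument then shows $V\cup\{p\}$ is a neighborhood of $p$.

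For the immersion property, the $TS_1$-directions already span the tangent space of the embedded sphere $\Sigma_r$ (for $r$ small, $\Sigma_r$ is a normal graph of small height over the geodesic sphere $S_r(\tau(r))$, hence embedded), so it is enough to check that $\partial_r\Theta$ has non-vanishing normal component along $\Sigma_r$. This is precisely the computation carried out in the proof of Corollary~\ref{thm:reparametrize}: using $\tau(r)=O(r^2)$ from Lemma~\ref{thm:tau-is-o-r2} together with the expansions of $H$ and $\nu$ from Section~\ref{sec:expansion}, the normal part of the variation vector field satisfies $X^\perp\big|_{\Sigma_r}=\tfrac{\partial}{\partial r_\tau}+O(r^2)$, where $r_\tau=d_g(\tau(r),\cdot)$, while $\nu_{\Sigma_r}=\tfrac{\partial}{\partial r_\tau}+O(r^3)$. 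Hence $g(\partial_r\Theta,\nu_{\Sigma_r})=g(X^\perp,\nu_{\Sigma_r})=1+O(r^2)$, which is bounded away from $0$ after shrinking $r_2$. Thus $\partial_r\Theta$ is nowhere tangent to $\Sigma_r$ and $\Theta$ is an immersion on $(0,r_2)\times S_1$. I expect this to be the main point: it is where the hypotheses enter (via Lemma~\ref{thm:tau-is-o-r2}), since if the centres $\tau(r)$ drifted at rate $O(1)$ rather than $O(r)$ the leaves could tilt and cross; the remainder is soft.

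Finally, for injectivity and the covering property, observe that for $r$ small each $\Sigma_r$ is an embedded $2$-sphere in a coordinate ball around $p$ and hence bounds a region $B_r\ni p$, with $\bigcap_{r\in(0,r_2)}\overline{B_r}=\{p\}$ because $\overline{B_r}$ is contained in a geodesic ball of radius $O(r)$ near $p$. The sign computation above says that along the family the leaves move strictly in their outward normal direction, so by a standard continuity (chaining) argument the regions are strictly nested: $\overline{B_{r'}}\subset B_r$ whenever $0<r'<r<r_2$. In particular the $\Sigma_r$ are mutually disjoint and $\Theta$ is injective on $(0,r_2)\times S_1$. Moreover, given $q\neq p$ close enough to $p$ we have $q\notin\overline{B_r}$ for small $r$ and $q\in B_r$ for $r$ near $r_2$, so $r^*(q):=\inf\{r:q\in B_r\}\in(0,r_2)$ and $q\in\partial B_{r^*}=\Sigma_{r^*}\subset V$; thus $V$ contains a pointed neighborhood of $p$. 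Setting $U:=V\cup\{p\}$ we conclude that $U$ is open, $U\setminus\{p\}=\bigcup_{r\in(0,r_2)}\Sigma_r$, and the $\Sigma_r$ foliate $U\setminus\{p\}$, which is the assertion of the proposition.
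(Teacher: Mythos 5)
Your argument is correct, but it runs along a genuinely different track than the paper's. The paper writes each leaf as a \emph{radial graph over $S^2$ in normal coordinates centred at $p$}: it sets $\Psi(r,x)=\exp_p^{-1}\exp_{c(\tau(r))}\bigl(r(x+r^2\varphi(r)(x))\bigr)$, uses Lemma~\ref{thm:tau-is-o-r2} to get $\partial_r\Psi(0,x)=x$, concludes that $\beta(r,\cdot)=\Psi/|\Psi|$ is a diffeomorphism of $S^2$ for small $r$, and then shows that the radial height $\eta(r,x)=|\Psi(r,\beta^{-1}(r,x))|$ satisfies $\partial_r\eta|_{r=0}=1$. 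Strict monotonicity of $\eta(\cdot,x)$ for each fixed direction $x$ delivers disjointness, nesting and the sweeping out of a pointed neighbourhood in one stroke, with no separate injectivity discussion. You instead establish that the parameterization $\Theta$ is an immersion by checking that the normal speed $g(\partial_r\Theta,\nu)=1+O(r^2)$ (reusing the computation from Corollary~\ref{thm:reparametrize}), and then obtain injectivity and the covering property from a nesting argument for the enclosed regions $B_r$. Both routes hinge on exactly the same input, $\tau(r)=O(r^2)$ from Lemma~\ref{thm:tau-is-o-r2}, which you correctly identify as the place where the hypotheses enter. The trade-off: your normal-speed criterion is intrinsic and would generalize to families that are not radial graphs, but the chaining step (``positive normal speed implies the $B_r$ are strictly nested'') is only sketched and does need a word about uniformity --- on each compact subinterval of $(0,r_2)$ the local nesting radius is bounded below by smooth dependence on $r$ and compactness of the leaves, after which transitivity gives global nesting; the paper's radial-graph device sidesteps this entirely by reducing everything to monotonicity of the scalar function $\eta$.
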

\begin{proof}
  Define the maps
  \begin{align*}
    \Psi^r &:= \exp_p^{-1} \exp_{c(\tau(r))},\\
    \Psi(r,x) &:= \Psi^r(r(x+r^2\varphi(r)(x)))	\quad \text{and}\\
    \beta(r,x)&:=\frac{\Psi(r,x)}{|\Psi(r,x)|}.
  \end{align*}
  We claim that there exists $\tilde r \in(0,r_1]$ such that
  $|\Psi(r,x)|\not=0$ every $x\in S^2$ and such that
  $\beta(r,\cdot) :S^2 \to S^2 $ is a family of diffeomorphisms which
  can be smoothly extended to $r=0$ by the identity.

  Indeed, this follows from the facts that 
  \begin{equation*}
    \frac{\partial \Psi}{\partial r}
    = (d_x\Psi^r)\big(x+r^2\varphi(r)(x)+r(r^2\varphi(r)(x))_r\big)
    +\left(\frac{\partial \Psi^r}{\partial r}\right)\big(r(x+r^2\varphi(r)(x))\big)
  \end{equation*}
  and 
  \begin{equation*}
    \left.\frac{\partial \Psi^r}{\partial r}\right|_{r=0}
    = \left.\frac{\partial}{\partial\tau^i}\big(\exp_p^{-1} \exp_{c(\tau(r))}\big)\right|_{\tau=0} 
    \cdot \left.\frac{\partial \tau^i}{\partial r}\right|_{r=0}=0
  \end{equation*}
  where we used Lemma~\ref{thm:tau-is-o-r2} in the last equality. In combination
  \begin{equation*}
    \frac{\partial \Psi}{\partial r}(0,x)=x.
  \end{equation*}
  Hence
  \begin{equation*}
    \Psi(r,x)=rx+O(r^2)\quad\text{as}\quad r\to 0.
  \end{equation*}
  In particular, $\Psi(r,x)\not=0$ for $r$ small enough and
  \begin{equation*}
    \beta(r,x)=\frac{x+O(r)}{|x+O(r)|}.
  \end{equation*}
  This establishes the claim.

  Let $\eta(r,x):= |\Psi(r,\beta^{-1}(r,x))|$ and calculate
  \begin{equation*}
    \frac{\partial \eta}{\partial r}
    = \frac{\Psi}{|\Psi|} \left(\frac{\partial \Psi}{\partial r}+(d_x\Psi)\left(\frac{\partial}{\partial r}\beta^{-1}\right)\right)
    = \frac{x+O(r)}{|x+O(r)|}\, (x+O(r)).
  \end{equation*}
  This yields
  \begin{equation*}
    \left.\frac{\partial \eta}{\partial r}\right|_{r=0}=1.
  \end{equation*}
  Consequently $\eta$ is strictly increasing for $r$ small enough
  which shows that all the surfaces are disjoint.
\end{proof}

\section{Local Uniqueness}
\label{sec:uniqueness}
By inspecting the proof of theorem~\ref{thm:main} above and by the
local uniqueness of solutions obtained via the implicit function
theorem, we obtain a local uniqueness result for the $\Sigma_a$. To
state this, we use the notation introduced at the beginning of
section~\ref{sec:equation}.
\begin{corollary}
  \label{thm:uniqueness1}
  Fix $\alpha\in(0,1)$. Let $(M,g)$ be a Riemannian 3-manifold and
  $p\in M$ be a non-degenerate critical point of the scalar curvature.

  Denote by $\varphi_0\in C^\infty(S^2)$ the solution of~\eqref{eq:13}
  where $\Ric$ is evaluated at $p$.

  Then there exist $r_0\in(0,\infty)$, a neigborhood
  $\Omega\subset C^{4,\alpha}(S^2)$ of $\varphi_0$, a neighborhood
  $U\subset\bR^3$ of the origin, and an open interval $I\subset\bR$
  such that $-\frac{1}{3}\Scal(p) \in I$ with the following
  properties:

  Assume that $\Sigma\subset M$ is such that:
  \begin{enumerate}
  \item $\Sigma = S(r,\tau,r^2\varphi)$ up to reparameterization,
  \item On $\Sigma$ we have that
    \begin{equation*}
       \Delta H + H|\Acirc|^2 + H \Ric(\nu,\nu) = \lambda H.
    \end{equation*}
  \item $(r,\tau,\varphi,\lambda) \in (0,r_0) \times U \times
    \Omega \times I$ and $P_i(\varphi)=0$ for $i\in\{0,1\}$.
  \end{enumerate}
  Then $\Sigma = \Sigma_a$ where $\Sigma_a$ is as in
  equation~\eqref{eq:24} and such that $|\Sigma|=|\Sigma_a|$.
\end{corollary}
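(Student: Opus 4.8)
The plan is to deduce Corollary~\ref{thm:uniqueness1} entirely from the local uniqueness built into the implicit function theorem applications of section~\ref{sec:equation}, tracking carefully which choices are free and which are pinned down. First I would fix the radius $r$ and regard $\Sigma$ as a graph $S(r,\tau,r^2\varphi)$ with $(\tau,\varphi,\lambda)$ in the prescribed neighborhoods, satisfying the area-constrained Willmore equation. After pulling back to $S_1$ and rescaling to the $g^{r,\tau}$ metric exactly as in section~\ref{sec:equation}, the equation says $\Phi(r,\tau,r^2\varphi,\lambda)=0$. Splitting this into its projections, the hypothesis $P_i(\varphi)=0$ for $i\in\{0,1\}$ means $\varphi\in K^\perp$, and the three scalar equations $\tilde P_i(\Phi(r,\tau,r^2\varphi,\lambda))=0$ together with the $K^\perp$-equation are precisely the system solved in Lemmas~\ref{thm:solution-lambda-tau} and~\ref{thm:solution-final}.

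Next I would invoke the uniqueness clause of the implicit function theorem twice. In Lemma~\ref{thm:solution-lambda-tau} the map $(r,\varphi)\mapsto(\lambda,\tau)$ is the unique solution of the kernel-projection system in a neighborhood of $(0,\varphi_0,-\tfrac13\Scal(p),0)$, because the relevant Jacobian $\mathrm{diag}(8\pi,\tfrac{4\pi}{3}\nabla^2\Scal|_{r=0})$ is invertible; hence shrinking $r_0$, $U$, $I$ as necessary, any $(\tau,\lambda)$ satisfying $\tilde P_i(\Phi)=0$ with $(r,\varphi)\in(0,r_0)\times\Omega$ must equal $(\tau(r,\varphi),\lambda(r,\varphi))$. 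Then, feeding this back into $\Phi(r,\tau(r,\varphi),r^2\varphi,\lambda(r,\varphi))=0$ restricted to $K^\perp$, the implicit function argument of Lemma~\ref{thm:solution-final}—whose linearization at $r=0$ is $W_{0,\tau,\lambda}$ acting invertibly on $K^\perp$—gives $\varphi=\varphi(r)$ uniquely near $\varphi_0$. Combining, $(\tau,\varphi,\lambda)=(\tau(r,\varphi(r)),\varphi(r),\lambda(r,\varphi(r)))$, so $\Sigma$ coincides, up to the original reparameterization, with $\Sigma_r$ from Lemma~\ref{thm:solution-final}. Finally, by Corollary~\ref{thm:reparametrize} the area map $r\mapsto a(r)$ is strictly increasing on $(0,r_2)$, so the equality $|\Sigma|=|\Sigma_a|$ determines $r=\tilde r(a)$ and identifies $\Sigma$ with $\Sigma_a$ as in~\eqref{eq:24}.

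The main obstacle I anticipate is not any new estimate but the bookkeeping needed to make ``up to reparameterization'' harmless and to guarantee that the neighborhoods $\Omega$, $U$, $I$ from the existence proof can be chosen small enough that the hypotheses of the corollary force the solution into the exact range where the two implicit function theorems apply. One must check that an arbitrary reparameterization of $\Sigma$ of the stated form still yields, after the normalization $P_i(\varphi)=0$, a representative whose defining data lie in these neighborhoods; this is a matter of observing that the normalization $\varphi\in K^\perp$ together with the explicit graph structure $S(r,\tau,r^2\varphi)$ already fixes the parameterization up to the residual symmetry absorbed by $\tau$, which is exactly the freedom the implicit function theorem in Lemma~\ref{thm:solution-lambda-tau} controls. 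Once that is in place, there is nothing left but to quote local uniqueness and the monotonicity of the area.
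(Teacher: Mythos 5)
Your proposal is correct and follows exactly the route the paper intends: the paper gives no detailed proof of this corollary, stating only that it follows ``by inspecting the proof of Theorem~\ref{thm:main} and by the local uniqueness of solutions obtained via the implicit function theorem,'' which is precisely the two-stage uniqueness argument (Lemma~\ref{thm:solution-lambda-tau} for $(\tau,\lambda)$, then Lemma~\ref{thm:solution-final} for $\varphi\in K^\perp$) combined with the area monotonicity of Corollary~\ref{thm:reparametrize} that you spell out. The only quibble is cosmetic: $\tilde P_0$ and $\tilde P_1$ give four scalar equations (one plus three), not three, matching the four unknowns $(\lambda,\tau)\in\bR\times\bR^3$.
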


Note that if $\Omega_b\subset C^{4,\alpha}(S^2)$ is any bounded subset
and $\varphi \in\Omega_b$, then there exists a constant
$C=C(\Omega_b)$ such that
\begin{equation*}
  \left| \CW(S(r,\tau,r^2\varphi) - 4\pi \right| \leq C r^2.
\end{equation*}
In \cite{LM:2013} it was shown that there exists $\eps=\eps(M,g)>0$
such that if $\Sigma$ is a solution~\eqref{eq:maineq} with
$|\Sigma|<\eps$ and $\CW(\Sigma)\leq 4\pi+\eps$ also satisfies
\begin{equation*}
  \left|\lambda + \frac{1}{3}\Scal(p)\right| \leq Cr
\end{equation*}
for some constant $C=C(M,g)$.  Hence, it follows that in the statement
of Corollary~\ref{thm:uniqueness1} the condition on $\lambda$ is in
fact not needed.
\begin{corollary}
  \label{thm:uniqueness2}
  There exist $r_0'\in(0,\infty)$, a neigborhood
  $\Omega'\subset C^{4,\alpha}(S^2)$ of $\varphi_0$, a neighborhood
  $U'\subset\bR^3$ of the origin with the following
  properties:

  Assume that $\Sigma\subset M$ is such that:
  \begin{enumerate}
  \item $\Sigma = S(r,\tau,r^2\varphi)$ up to reparameterization,
  \item On $\Sigma$ we have that
    \begin{equation*}
       \Delta H + H|\Acirc|^2 + H \Ric(\nu,\nu) = \lambda H.
    \end{equation*}
  \item $(r,\tau,\varphi) \in (0,r_0) \times U \times (\Omega\cap K)$
    and $P_i(\varphi)=0$ for $i\in\{0,1\}$.
  \end{enumerate}
  Then $\Sigma = \Sigma_a$ where $\Sigma_a$ is as in
  equation~\eqref{eq:24} and such that $|\Sigma|=|\Sigma_a|$.
\end{corollary}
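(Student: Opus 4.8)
The plan is to deduce Corollary~\ref{thm:uniqueness2} from Corollary~\ref{thm:uniqueness1} by eliminating the hypothesis $\lambda\in I$, using the discussion immediately preceding the statement. Concretely, suppose $\Sigma=S(r,\tau,r^2\varphi)$ satisfies the Willmore equation with Lagrange parameter $\lambda$, with $(r,\tau,\varphi)\in(0,r_0')\times U'\times(\Omega'\cap K)$ and $P_i(\varphi)=0$ for $i\in\{0,1\}$. I want to choose $r_0'$, $\Omega'$, $U'$ small enough that the apriori estimate from \cite{LM:2013} applies and forces $\lambda$ into the interval $I$ from Corollary~\ref{thm:uniqueness1}; once that is done, the three hypotheses of Corollary~\ref{thm:uniqueness1} are all met and the conclusion $\Sigma=\Sigma_a$ with $|\Sigma|=|\Sigma_a|$ follows verbatim.

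First I would make the two quantitative inputs precise. Since $\Omega'$ will be a bounded subset of $C^{4,\alpha}(S^2)$, the remark before the statement gives a constant $C_1=C_1(\Omega')$ with $|\CW(S(r,\tau,r^2\varphi))-4\pi|\le C_1 r^2$; hence shrinking $r_0'$ we get $\CW(\Sigma)\le 4\pi+\eps$ for the $\eps=\eps(M,g)>0$ of \cite{LM:2013}. Likewise, shrinking $r_0'$ further we may assume $|\Sigma|<\eps$: indeed by Corollary~\ref{thm:reparametrize}-type estimates (or directly from \eqref{eq:2.4}) the area of $S(r,\tau,r^2\varphi)$ is $4\pi r^2+O(r^2)$ uniformly for $\varphi\in\Omega'$, so it is small once $r$ is. Then the cited estimate yields $|\lambda+\tfrac13\Scal(p)|\le C r$ for a constant $C=C(M,g)$. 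Choosing $r_0'\le r_0$ small enough that $C r_0'$ is less than the distance from $-\tfrac13\Scal(p)$ to $\partial I$, we conclude $\lambda\in I$.

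With $\lambda\in I$ in hand, the plan is simply to invoke Corollary~\ref{thm:uniqueness1}: take $\Omega'\subset\Omega$, $U'\subset U$, $r_0'\le r_0$ (all the neighborhoods from that corollary), and note that hypothesis (3) there is weaker than ours since $\Omega'\cap K\subset\Omega$, so items (1)--(3) of Corollary~\ref{thm:uniqueness1} hold; its conclusion gives $\Sigma=\Sigma_a$ with $|\Sigma|=|\Sigma_a|$, which is exactly the assertion. The extra restriction $\varphi\in K$ here is harmless — it only makes the hypotheses stronger — and is included because in the intended applications one knows apriori that, after projecting off $K^\perp$ via the implicit function construction, the relevant graph functions lie in (a neighborhood of $\varphi_0$ inside) $K$; the point of the corollary is that no separate control on $\lambda$ has to be assumed.

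The main obstacle I anticipate is purely bookkeeping: making sure the chain of shrinkings of $r_0'$, $\Omega'$, $U'$ is consistent — i.e. that the constant $C_1(\Omega')$ from the area/Willmore expansion does not blow up as $\Omega'$ is chosen, which is fine since we only ever shrink $\Omega'$ and $C_1$ is monotone in the set, and that the apriori estimate of \cite{LM:2013} is applied with its hypotheses ($|\Sigma|<\eps$ and $\CW(\Sigma)\le 4\pi+\eps$) genuinely verified for \emph{every} $\Sigma$ of the allowed form, uniformly. There is no new analytic content beyond what is already established; the corollary is essentially a repackaging of Corollary~\ref{thm:uniqueness1} together with the quoted energy/area estimate, so the proof should be short.
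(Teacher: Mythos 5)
Your proposal is correct and follows essentially the same route as the paper: the paper's justification of Corollary~\ref{thm:uniqueness2} is precisely the paragraph preceding it, which combines the bound $\left| \CW(S(r,\tau,r^2\varphi)) - 4\pi \right| \leq C r^2$ for bounded $\Omega$ with the estimate $\left|\lambda + \frac{1}{3}\Scal(p)\right| \leq Cr$ from \cite{LM:2013} to force $\lambda$ into $I$ for small $r$, so that the hypothesis on $\lambda$ can be dropped and Corollary~\ref{thm:uniqueness1} applies. (Your ``$4\pi r^2+O(r^2)$'' for the area should read $4\pi r^2+O(r^4)$, but this is immaterial to the argument.)
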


Note that this uniqueness applies to individual solutions
of~\eqref{eq:maineq} and not to whole foliations. It is not difficult
though, to prove a result similar to~\cite[Section 2]{Ye:1991} to deal
with the uniqueness of foliations centered at $p$ based on the a
priori estimates on such surfaces in~\cite{LM:2010,LM:2013,Laurin-Mondino:2014}.


\bibliographystyle{abbrv}
\bibliography{localfoliation}

\end{document}